
\documentclass[A4,12pt]{article}

\usepackage{amssymb}
\usepackage{amsmath}
\usepackage{amsthm}
\usepackage{graphicx}
\usepackage{color}
\usepackage{float}
\usepackage{nicefrac}
\usepackage{epstopdf}
\usepackage{amsfonts}
\usepackage{subfig}
\usepackage{graphicx}
\usepackage{tikz} 
\usepackage{pgfplots}
\usepackage{algorithm}
\usepackage{algpseudocode}

  \usepackage{multirow}
  
  \newcommand{\R}[1]{\mathbb{R}^{#1}}

  \newcommand{\CX}{\mathcal{X}}
  \newcommand{\CY}{\mathcal{Y}}
  \newcommand{\CZ}{\mathcal{Z}}
  \newcommand{\dif}{\mathrm{d}}
  
  \DeclareMathOperator*{\argmin}{\arg \min}%
  \DeclareMathOperator*{\argmax}{\arg \max}%

 \newcommand*\Let[2]{\State #1 $\gets$ #2}

\newtheorem{lemma}{Lemma}[section]
\newtheorem{theorem}{Theorem}[section]

\title{A Variational Model for Joint Motion Estimation and Image Reconstruction}

\author{Martin Burger\thanks{Institute for Computational and Applied Mathematics and Cells in Motion Cluster of Excellence, University of M{\"u}nster, Orl\'{e}ans-Ring 10, 48149 M{\"u}nster, Germany, Email: \href{mailto:martin.burger@wwu.de}{\texttt{martin.burger@wwu.de}}}, Hendrik Dirks\thanks{Institute for Computational and Applied Mathematics and Cells in Motion Cluster of Excellence, University of M{\"u}nster, Orl\'{e}ans-Ring 10, 48149 M{\"u}nster, Germany, Email: \href{mailto:hendrik.dirks@wwu.de}{\texttt{hendrik.dirks@wwu.de}}}, Carola-Bibiane Sch{\"o}nlieb\thanks{Department of Applied Mathematics and Theoretical Physics (DAMTP), University of Cambridge, Wilberforce Road, Cambridge CB3 0WA, United Kingdom, Email: \href{mailto:C.B.Schoenlieb@damtp.cam.ac.uk}{\texttt{C.B.Schoenlieb@damtp.cam.ac.uk}}}}

\begin{document}

\maketitle

\begin{abstract}
The aim of this paper is to derive and analyze a variational model for the joint estimation of motion and reconstruction of image sequences, which is based on a time-continuous Eulerian motion model. The model can be set up in terms of the continuity equation or the brightness constancy equation.
The analysis in this paper focuses on the latter for robust motion estimation on sequences of two-dimensional images. We rigorously prove the existence of a minimizer in a suitable function space setting. Moreover, we discuss the numerical solution of the model based on primal-dual algorithms and investigate several examples. Finally, the benefits of our model compared to existing techniques, such as sequential image reconstruction and motion estimation, are shown.
\end{abstract}



\section{Introduction}
Image reconstruction and motion estimation are important tasks in image processing. Such problems arise for example in modern medicine, biology, chemistry or physics, where even the smallest objects are observed by high resolution microscopes. To characterize the dynamics involved in such data, velocity fields between consecutive image frames are calculated. This is challenging, since the recorded images often suffer from low resolution, low contrast, different gray levels and noise. Methods that simultaneously denoise the recorded image sequence and calculate the underlying velocity field offer new opportunities, since both tasks may endorse each other. \\
Our ansatz aims at reconstructing a given sequence $u$ of images and calculating flow fields $\boldsymbol{v}$ between subsequent images at the same time. For given measurements $f=Ku$ this can be achieved by minimizing the variational model
\begin{align}
\label{equation:generalModelEQ}
	&\int_0^T \frac{1}{2}\left\|K_tu(\cdot,t)-f(\cdot,t)\right\|_2^2 + \alpha \mathcal{R}(u(\cdot,t))  + \beta \mathcal{S}(\boldsymbol{v}(\cdot,t)) dt\\
	\text{s.t.} &\quad \mathcal{M}(u,\boldsymbol{v}) = 0\notag
\end{align}
with respect to $u$ and $\boldsymbol{v}$ simultaneously. The denoising part is based on the ROF model \cite{rudin1992nonlinear}. The first part $\left\|Ku-f\right\|_2^2$ connects the input data $f$ with the image sequence $u$ via a linear operator $K$. Depending on the application $K$ may model the cutting out of a subset $\Sigma\subset\Omega$ for inpainting, a subsampling for super resolution, a blur for deconvolution or a Radon transform for computed tomography. Additional a-priori information about the structure of $u$ respectively $\boldsymbol{v}$ can be incorporated into each frame via the regularization terms $\mathcal{R}(u(\cdot,t))$ and $\mathcal{S}(\boldsymbol{v}(\cdot,t))$, while their significance is weighted using $\alpha$ and $\beta$. Finally, flow field and images are coupled by a constraint $\mathcal{M}(u,\boldsymbol{v}) = 0$ (e.g. the optical flow \eqref{subsection:noiseSensitivity}).\\
In the last two decades, variational models for image reconstruction have become very popular. One of the most famous models, introduced by Rudin, Osher and Fatemi in 1992 \cite{rudin1992nonlinear}, is the total variation (TV) model, where the authors couple a L$^2$ data fidelity term with a total variation regularization. Data-term and regularizer in the ROF model match with the first two terms model \eqref{equation:generalModelEQ}. The TV-regularization results in a denoised image with cartoon-like features. This model has also been adapted to image deblurring \cite{Wang07afast}, inpainting \cite{shen2002mathematical} and superresolution \cite{mitzel2009video,unger2010convex} and tomographic reconstruction \cite{sawatzky2008accurate,kosters2011emrecon}. We collectively call these image reconstruction models.\\
Estimating the flow from image sequences has been discussed in the literature for decades. Already in 1981, Horn and Schunck proposed a variational model for flow estimation \cite{horn1981determining}. This basic model uses the $L^2$ norm for the optical flow term as well as for the gradient regularizer and became very popular. Aubert et al. analyzed the $L^1$ norm for the optical flow constraint \cite{aubert1999computing} in 1999 and demonstrated its advantages towards a quadratic $L^2$ norm. In 2006, Papenberg, Weickert et al. \cite{papenberg2006highly} introduced the total variation regularization, respectively the differentiable approximation, to the field of flow estimation. An efficient duality-based $L^1-TV$ algorithm for flow estimation was proposed by Zach, Pock and Bischof in 2007 \cite{zach2007duality}. Model \eqref{equation:generalModelEQ} also incorporates a flow estimation problem by the constraint $\mathcal{M}(u,\boldsymbol{v})$ and suitable regularization $\mathcal{S}(\boldsymbol{v}(\cdot,t))$. \\
The topic of joint models for motion estimation and image reconstruction was already discussed by Tomasi and Kanade \cite{tomasi1992shape} in 1992. Instead of a variational approach, they used a matrix-based discrete formulation with constraints to the matrix rank to find a proper solution. In 2002, Gilland, Mair, Bowsher and Jaszczak published a joint variational model for gated cardiac CT \cite{gilland2002simultaneous}. For two images, they formulate a data term, based on the Kullback-Leibler divergence (cf. \cite{brune20104d} for details) and incorporate the motion field via quadratic deformation term and regularizer. 
In the field of optimal control Borzi, Ito and Kunisch \cite{borzi2003optimal} formulated a smooth cost functional for an optimal control problem that incorporates the optical flow formulation with unknown image sequence and motion field with additional initial value problem for the image sequence.\\
Bar, Berkels, Rumpf and Sapiro proposed a variational framework for joint motion estimation and image deblurring in 2007 \cite{bar2007variational}. The underlying flow is assumed to be a translation and coupled into a blurring model for the foreground and background. This results in a Mumford-Shah-type functional. Also in 2007, Shen, Zhang, Huang and Li proposed a statistical approach for joint motion estimation, segmentation and superresolution \cite{shen2007map}. The model assumes an affine linear transformation of the segmentation labels to incorporate the dynamics and is solved calculating the MAP solution. Another possible approach was given by Brune in 2010 \cite{brune20104d}. The 4d (3d + time) variational model consists of an $L^2$ data term for image reconstruction and incorporates the underlying dynamics using a variational term, introduced by Benamou and Brenier \cite{benamou2000computational,benamou2002monge}. In our model, the constraint $\mathcal{M}(u,\boldsymbol{v})$ connects image sequence $u$ and velocity field $\boldsymbol{v}$.
We mention recent development in \cite{suhr}, which also discusses a joint motion estimation and image reconstruction model in a similar spirit. The focus there is however motion compensation in the reconstruction relative to an initial state, consequently a Lagrangian approach with the initial state as reference image is used and the motion is modeled via hyperelastic deformations. Finally, in \cite{benamou:hal-01295299} Benamou, Carlier, Santambrogio draw a connection to stochastic Mean Field Games, where the underlying motion is described from the Eulerian and Lagrangian perspective.
\subsection{Contents}
The paper is structured as follows: In Section \ref{section:jointModel} we shortly introduce a basic framework for variational image reconstruction and motion estimation and afterwards combine both which leads to our joint model. Afterwards, we give a detailed proof for the existence of a minimizer based on the fundamental theorem of optimization in Section \ref{section:analyticalResults}. Finally, we introduce a numerical framework  for minimizing our model in Section \ref{section:numericalFramework} and provide applications to different image processing applications in Section \ref{section:results}.

\section{Joint motion estimation and image reconstruction}
\label{section:jointModel}
\subsection{Noise sensitivity of motion estimation}
\label{subsection:noiseSensitivity}
One of the most common techniques to formally link intensity variations in image sequences $u(x,t)$ to the underlying velocity field  $\boldsymbol{v}(x,t)$ is the optical flow constraint. Based on the assumption that the image intensity $u(x,t)$ is constant along a trajectory $x(t)$ with $\frac{dx}{dt}=\boldsymbol{v}(x,t)$ we get using the chain-rule
\begin{align}
	0 = \frac{du}{dt} =  \frac{\partial u}{\partial t} + \sum_{i=1}^n \frac{\partial u}{\partial x_i}\frac{dx_i}{dt} = u_t + \nabla u\cdot\boldsymbol{v}.
	\label{equation:opticalFlowConstraint}
\end{align}
The last equation is generally known as the \textbf{optical flow constraint}. The constraint constitutes in every point $x\in\Omega_T$ one equation, but in the context of motion estimation from images we usually have two or three spatial dimensions. Consequently, the problem is massively underdetermined. However, it is possible to estimate the motion using a variational model  
\begin{align*}
\min_{\boldsymbol{v}} \mathcal{D}(u,\boldsymbol{v}) + \alpha \mathcal{R}(\boldsymbol{v}),
\end{align*}
where $\mathcal{D}(u,\boldsymbol{v})$ represents the so-called \textit{data term} and incorporates the optical flow constraint in a suitable norm. The second part $\mathcal{R}(\boldsymbol{v})$ models additional a-priori knowledge on $\boldsymbol{v}$ and is denoted as \textit{regularizer}. The parameter $\alpha$ regulates between data term and regularizer.\\
Possible choices for the data term are 
\begin{align*}
\mathcal{D}(u,\boldsymbol{v}) := \frac{1}{2}\left\|\boldsymbol{v}\cdot\nabla u + u_t\right\|_2^2,\text{ or }\mathcal{D}(u,\boldsymbol{v}) := \left\|\boldsymbol{v}\cdot\nabla u + u_t\right\|_1.
\end{align*}	
The quadratic L$^2$ norm can be interpreted as solving the optical flow constraint in a least-squares sense inside the image domain $\Omega$. On the other hand, taking the L$^1$ norm enforces the optical flow constraint linearly and is able to handle outliers more robust \cite{aubert1999computing}. \\
The regularizer $\mathcal{R}(\boldsymbol{v})$ has to be chosen such that the a-priori knowledge is modeled in a reasonable way. If the solution is expected to be smooth, a quadratic L$^2$ norm on the gradient of $\boldsymbol{v}$ is chosen
as in the classical Horn-Schunck model.
Another possible approach is to choose the \textit{total variation} (TV) of $\boldsymbol{v}$ if we expect piecewise constant parts of motion, an approach we merely pursue in this paper. 

\begin{figure}[H]
	\centering
	\includegraphics[width=0.4\linewidth]{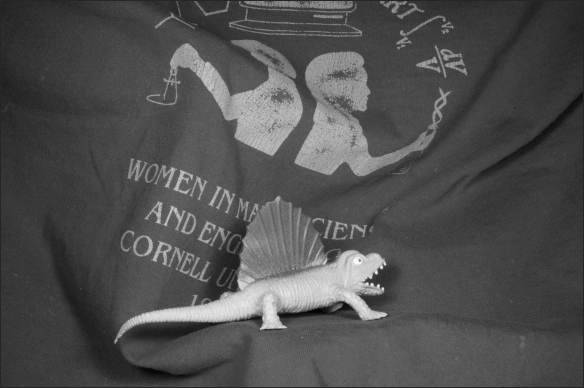}\quad 
	\includegraphics[width=0.4\linewidth]{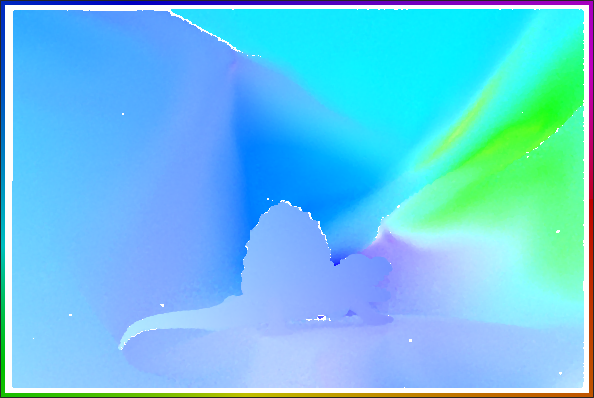}
	\caption{Image and color-coded ground-truth velocity field from the Middlebury optical flow database \cite{baker2011database}.}
	\label{fig:jointMotivation}
\end{figure}
In practical applications (e.g. microscopy) the recorded images often come with a lack of image quality which is caused by low acquisition times. This leads to another very interesting aspect in motion estimation - how does the noise-level on the image data correspond to the quality of the estimated velocity field $\boldsymbol{v}$. To answer this question we created a series of noisy images, where Gaussian noise with increasing variance $\sigma$ was added. Compare Figure \ref{fig:jointMotivation} for one of the two images and the corresponding ground-truth velocity field $\boldsymbol{v}_{GT}$. Afterwards, we estimated the motion using the $L^1-TV$ optical flow algorithm. In Figure \ref{fig:plotAEEvsNoise} we plotted the variance of noise on the x-axis versus the \textbf{absolute endpoint error} (see Equation \eqref{definition:aee}) of the reconstruction on the y-axis.
We observe that already small levels of noise have massive influence to the motion estimation process. Consequently, before estimating the motion field, a preprocessing step may be applied to remove the noise. A more advanced technique is a variational model that is able to simultaneously denoise images and estimate the underlying motion, while both tasks improve each other.

\begin{figure}[H]
	\centering\hspace{-10mm}
	\includegraphics[width=0.4\linewidth]{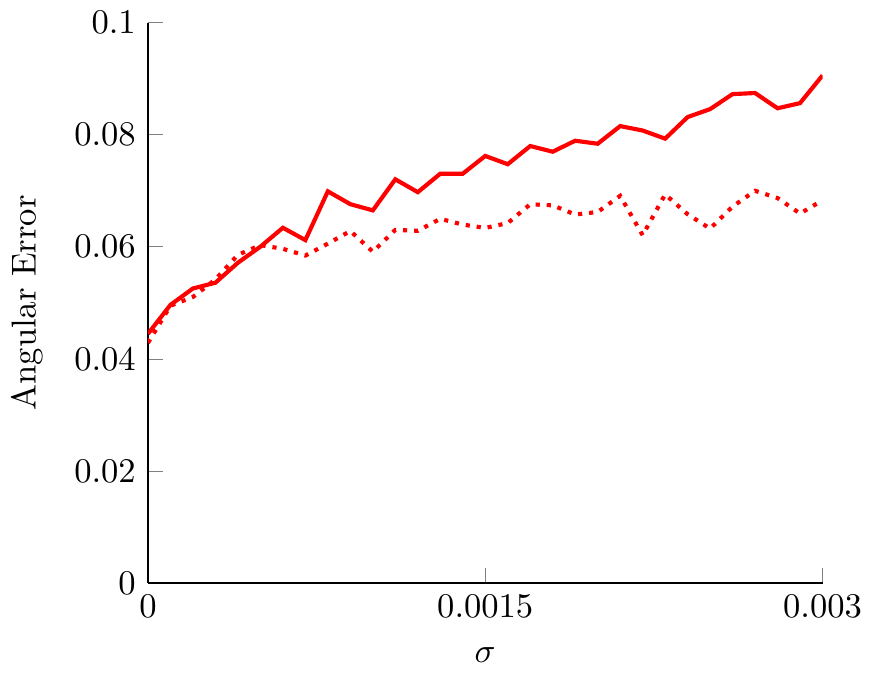}\quad 
	\includegraphics[width=0.4\linewidth]{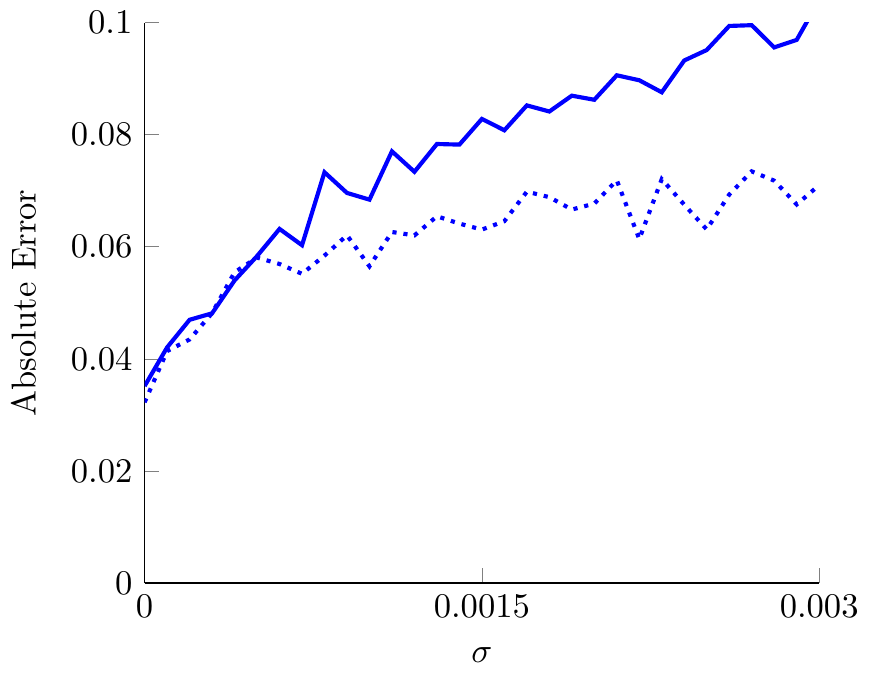}
	\caption{Absolute endpoint error (left) and angular error (right) for L$^1$-TV estimated velocity field (solid) and joint TV-TV optical flow model (dotted) for increasing levels of noise in the underlying data.}
	\label{fig:plotAEEvsNoise}
\end{figure}
\subsection{Proposed model}
In the reconstruction process we deal with measured data $f$, which can be modeled as $f = Ku + \delta$, where $\delta$ represents additive noise, often assumed to be Gaussian. The linear operator $K$ represents the forward operator modeling the relation of the image sequence $u$ on the measured data $f$. This general choice allows us to model applications such as denoising, deblurring, inpainting, superresolution, or even a Radon transform (dynamic CT, PET). Simultaneously we seek for the velocity field $\boldsymbol{v}:\Omega_T\rightarrow\R{2}$ describing the motion in the underlying image sequence $u$. \\
To reconstruct both, $u$ and $\boldsymbol{v}$ at the same time, we propose the following general model
\begin{align}
\label{model:generalModelRecMoEst}
\argmin_{u,\boldsymbol{v}} &\int_0^T \frac{1}{2}\left\|K_tu(\cdot,t)-f(\cdot,t)\right\|_2^2 + \alpha \mathcal{R}(u(\cdot,t))  + \beta \mathcal{S}(\boldsymbol{v}(\cdot,t)) dt\\
\text{s.t.} &\quad \mathcal{M}(u,\boldsymbol{v}) = 0\quad\text{ in } \mathcal{D'}(\left[0,T\right]\times\Omega).\notag
\end{align}

The first term in this functional acts as a data fidelity between the measured data $f$ and the objective function $u$ in the case of Gaussian noise. One may think of other data fidelities such as the $L^1$ distance for salt and pepper noise or the Kullback-Leibler divergence for Poisson noise.\\
The second term $\mathcal{R}$ in our general model constitutes a regularizer for the image sequence $u$. We mention that $R$ only acts on single time steps For reconstructing smooth images, the quadratic $L^2$-regularization on the gradient can be used, 
but a more natural regularization in the context of images is the total variation, which preserves edges to some extent and favors locally homogeneous intensities. The total variation coincides with the semi-norm on the space of functions with bounded variation and we set 
\begin{align*}
\mathcal{R}(u(\cdot,t)) = \left|u(\cdot,t)\right|_{BV}.
\end{align*} 
We mention that of course other higher-order versions of total variation (cf. \cite{benning2013higher, bredies2010total}) can be used for the regularization as well, with hardly any changes in the analysis due to the similar topological properties \cite{bredies2014regularization}. Regularizers for the velocity field $\boldsymbol{v}$ can be motivated very similar to those for images. An L$^2$ penalization of the gradient of $\boldsymbol{v}$
\begin{align*}
\mathcal{S}(\boldsymbol{v}(\cdot,t)) = \left\|\nabla \boldsymbol{v}(\cdot,t)\right\|_2^2
\end{align*}
leads to smooth velocity fields whereas a total variation based regularizer
\begin{align*}
\mathcal{S}(\boldsymbol{v}(\cdot,t)) = \left|\boldsymbol{v}(\cdot,t)\right|_{BV}
\end{align*}
favors piecewise constant flow fields. We mention that constraints such as an upper bound on the norm of $\boldsymbol{v}$ can be incorporated into $\mathcal{S}$ by adding the characteristic function of the constraint set.\\
The final ingredient is to connect image data and flow field by choosing a suitable constraint $\mathcal{M}(u,\boldsymbol{v})$. Using the brightness constancy assumption leads to the classical optical flow constraint and we set 
\begin{align*}
\mathcal{M}(u,\boldsymbol{v}) = u_t + \nabla u\cdot\boldsymbol{v}.
\end{align*}
More flexibility is given by the continuity equation
\begin{align*}
\mathcal{M}(u,\boldsymbol{v}) = u_t + \nabla\cdot(u\boldsymbol{v})
\end{align*}
that arises from the natural assumption that mass keeps constant over time. Both constraints add a non-linearity to the model, which leads to difficulties in the analysis arising from the  product $\nabla u\cdot\boldsymbol{v}$ resp. $\nabla\cdot(u\boldsymbol{v})$. Moreover, the model becomes non-convex and thus challenging from a numerical point of view, because local minimizers can appear. On the other hand the motion constraints and possibly strong regularization of motions provide a framework where motion estimation can enhance the image reconstruction and vice versa. In particular this makes the motion estimation more robust to noise (cp. Figure \ref{fig:plotAEEvsNoise}).
To simplify our notation, we use abbreviations of our model in the structure [\textit{regularizer u}]-[\textit{regularizer $\boldsymbol{v}$}] [\textit{constraint}] as e.g. for the TV-TV optical flow model.

\subsection{Preliminaries}
\label{notationsAndPreliminaries}
In what fallows we consider gray valued image data $u$ on a space-time domain $\Omega_T := \Omega\times\left[0,T\right]\subset\R{2}\times\R{+}, u : \Omega_T \rightarrow \R{}$. The sequence of flow-fields will be denoted by $\boldsymbol{v}$ and is defined on $\Omega_T$ with Neumann boundary conditions in space, $\boldsymbol{v} : \Omega_T \rightarrow \R{2}$. We expect finite speeds which gives the useful natural bound
\begin{align}
\left\|\boldsymbol{v}\right\|_\infty \leq c_\infty < \infty\quad \text{ a.e. in } \Omega\times\left[0,T\right].
\label{natuerlicheSchrankeV}
\end{align}
This assumption is reasonable since we have an application to real data (e.g. cell movement, car-tracking) in mind.

Besides this, a bound on the divergence of $\boldsymbol{v}$ in some Lebesgue space $\Theta$ is later needed in order to prove the existence of a minimizer. From the physical point of view the divergence measures the magnitude of the source or sink of $\boldsymbol{v}$. Consequently, having $\nabla\cdot\boldsymbol{v}\in L^p$ means an overall boundedness of sources and sinks, which is however not necessarily point wise Moreover, for the flow $\boldsymbol{v}$ the divergence is a measure for compressibility. We speak of an incompressible flow if $\nabla\cdot\boldsymbol{v} = 0$, so bounding the divergence means bounding the compressibility of $\boldsymbol{v}$.\\
For this paper required definitions can be found in the appendix. Moreover, we illustrate error measures for velocity fields and explain the discretization of our model in detail there. Finally, the appendix contains a pseudo-code and further results.

\section{Analytical results}
\label{section:analyticalResults}
The most challenging model from an analytical viewpoint is the joint TV-TV optical flow model
\begin{align}
\label{tvl2generalEnergy}
J(u,\boldsymbol{v}) = &\int_0^T \frac{1}{2}\left\|Ku-f\right\|_{\mathcal{F}}^2 + \alpha \left|u(\cdot,t)\right|_{BV}^p  + \beta \left| \boldsymbol{v}(\cdot,t)\right|_{BV}^q dt\\
&\text{s.t. } u_t + \nabla u\cdot\boldsymbol{v} = 0\quad\text{ in } \mathcal{D'}(\Omega_T),\notag\\
& \left\|\boldsymbol{v} \right\|_\infty \leq c_\infty,\left\|\nabla\cdot\boldsymbol{v}\right\|_{\Theta} \leq c^*\notag
\end{align} 
for $p>1$ and $q\geq 1$. For simplicity we restrict to the TV-TV optical flow model spatial dimension two here. We refer to \cite{dirks} for the full analysis including the mass preservation constraint and L$^2$ regularization. We want to mention that our results apply for any regularizers $\mathcal{R},\mathcal{S}$ satisfying $\mathcal{R}(u(\cdot,t)) \geq \left|u(\cdot,t)\right|_{BV}^p$ and $\mathcal{S}(\boldsymbol{v}(\cdot,t)) \geq \left|\boldsymbol{v}(\cdot,t)\right|_{BV}^q$. For the bounded linear operator $K$ we assume $K : L^1(\Omega)\rightarrow\mathcal{F}(\Omega)$ with some Hilbert space $\mathcal{F}(\Omega)$. Moreover, we want to mention that the operator $K$ operates on single time steps only, however the analysis can be generalized for time-dependent $K$ (cp. comment). Note that due to the embedding of Sobolev spaces $W^{1,s}$ into $BV$, the results can also be generalized to other gradient regularizations with $s > 1$. Finally, we mention that in the case of the continuity equation as a constraint the results can even be obtained under weaker conditions if the continuity equation is considered in a weak form, we refer to \cite{dirks} for further details. 

Note that for the following analysis the bound on the divergence of $\boldsymbol{v}$ is crucial. The chosen bound for $\boldsymbol{v}$ induces a condition on the space $\Theta$ for which we will need to assume that
\begin{equation} \label{eq:divergenceassumption}
\exists s>1,k>2:\quad L^{p^*s}(0,T;L^{k}(\Omega))^2 \hookrightarrow  \Theta,
\end{equation}
with $p^*$ being the H{\"o}lder conjugate of $p$.

Our main result in this section is the following:
\begin{theorem}{} \label{theorem:existenceMinimizerTVTVmodel}
	Let $\Omega\subset\R{2},p,q > 1, \hat{p} = \min\left\{p,2\right\}$, $K\boldsymbol{1}\neq 0$ and \eqref{eq:divergenceassumption} be satisfied.
	Then there exists a minimizer of \eqref{tvl2generalEnergy} in the space
	\begin{align*}
	\left\{ (u,\boldsymbol{v}) : u\in L^{\hat{p}}(0,T;BV(\Omega)) , \boldsymbol{v}\in L^{q}(0,T;BV(\Omega))^2,\nabla\cdot\boldsymbol{v}\in \Theta \right\}.
	\end{align*}
\end{theorem}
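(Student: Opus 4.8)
The plan is to use the direct method of the calculus of variations. First I would check that the feasible set is nonempty: the pair $(u,\boldsymbol{v})=(u_0,0)$ with $u_0$ constant in time and equal to any fixed $BV$ function satisfies the optical flow constraint, the $L^\infty$ bound and the divergence bound trivially, so $J$ has a finite value somewhere and an infimizing sequence $(u_n,\boldsymbol{v}_n)$ exists. Next I would establish coercivity, i.e. that along an infimizing sequence the quantities $\|u_n\|_{L^{\hat p}(0,T;BV(\Omega))}$, $\|\boldsymbol{v}_n\|_{L^q(0,T;BV(\Omega))^2}$ and $\|\nabla\cdot\boldsymbol{v}_n\|_\Theta$ stay bounded. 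The regularization terms directly bound $|u_n(\cdot,t)|_{BV}^p$ and $|\boldsymbol{v}_n(\cdot,t)|_{BV}^q$ in $L^1(0,T)$, hence the seminorms in the stated mixed norms; the $L^\infty$ and divergence constraints give the remaining two bounds for free. The missing piece is control of the $L^{\hat p}(0,T;L^1(\Omega))$ norm of $u_n$ itself (not just its gradient): here is where the hypothesis $K\boldsymbol{1}\neq 0$ enters, via a Poincaré–type inequality. One splits $u_n(\cdot,t)$ into its spatial mean $\bar u_n(t)$ plus a zero–mean part; the zero–mean part is controlled by $|u_n(\cdot,t)|_{BV}$ through Poincaré–Wirtinger, and the mean is controlled because $\|K_t u_n - f\|_{\mathcal F}$ is bounded (it appears in $J$) and $K_t\boldsymbol{1}\neq 0$ makes the map $c\mapsto K_t(c\boldsymbol{1})$ injective with a quantitative lower bound, so $|\bar u_n(t)|$ is bounded in terms of the data term plus the $BV$ seminorm. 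Integrating in time with the appropriate Hölder exponents yields the $L^{\hat p}(0,T;BV(\Omega))$ bound.

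With the a priori bounds in hand, I would extract a subsequence such that $u_n \rightharpoonup u$ weakly-$*$ in $L^{\hat p}(0,T;BV(\Omega))$ (using that $\hat p>1$ so the space is a dual of a separable space, or arguing via reflexivity of the $L^{\hat p}$ factor together with weak-$*$ $BV$ compactness slicewise and a measurability/selection argument), $\boldsymbol{v}_n \rightharpoonup \boldsymbol{v}$ weakly in $L^q(0,T;BV(\Omega))^2$, and $\nabla\cdot\boldsymbol{v}_n \rightharpoonup \nabla\cdot\boldsymbol{v}$ weakly in $\Theta$. By the compact embedding $BV(\Omega)\hookrightarrow\hookrightarrow L^1(\Omega)$ and an Aubin–Lions–type argument (the time-derivative information needed here comes precisely from the constraint $u_t=-\nabla u\cdot\boldsymbol{v}$, which together with the bounds on $\nabla u$ and $\boldsymbol{v}$ gives a bound on $u_t$ in a negative-order space), I would upgrade this to strong convergence $u_n \to u$ in $L^r(0,T;L^1(\Omega))$ for a suitable $r$, and in fact in $L^r(\Omega_T)$ for some $r>1$ after interpolating with the implied higher integrability of $BV$ functions in two dimensions ($BV(\Omega)\hookrightarrow L^2(\Omega)$). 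Strong convergence of $u_n$ is what allows passage to the limit in the nonlinear constraint.

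The main obstacle is passing to the limit in the nonlinear term $\nabla u_n\cdot\boldsymbol{v}_n$ in the distributional sense. The strategy is to move the derivative off $u_n$: for a test function $\varphi\in\mathcal D(\Omega_T)$ one writes $\int \nabla u_n\cdot\boldsymbol{v}_n\,\varphi = -\int u_n\,\nabla\cdot(\boldsymbol{v}_n\varphi) = -\int u_n(\nabla\cdot\boldsymbol{v}_n)\varphi - \int u_n\,\boldsymbol{v}_n\cdot\nabla\varphi$. In the first piece, $u_n\to u$ strongly in $L^{r}(\Omega_T)$ and $\nabla\cdot\boldsymbol{v}_n\rightharpoonup\nabla\cdot\boldsymbol{v}$ weakly in $\Theta$; the exponent condition \eqref{eq:divergenceassumption} — precisely the existence of $s>1$, $k>2$ with $L^{p^*s}(0,T;L^k(\Omega))^2\hookrightarrow\Theta$ — is exactly what guarantees that the product of a strongly convergent sequence in one space with a weakly convergent sequence in $\Theta$ converges, by a Hölder pairing that closes because the integrability gained from $\hat p$ / $p$ on $u_n$ and from $k>2$ on the divergence leaves room against the test function. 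In the second piece, $\boldsymbol{v}_n\to\boldsymbol{v}$ strongly in $L^{q'}(0,T;L^1(\Omega))^2$ (again by $BV$ compactness slicewise plus the $L^\infty$ bound, which in fact gives strong $L^m$ convergence for every finite $m$ by dominated convergence once we have a.e. convergence), while $u_n\rightharpoonup u$ weakly, so their product against the smooth $\nabla\varphi$ passes to the limit. Hence $u_t+\nabla u\cdot\boldsymbol{v}=0$ holds in $\mathcal D'(\Omega_T)$, the limit pair is feasible, and lower semicontinuity of each term of $J$ (weak-$*$ lower semicontinuity of the $BV$ seminorm slicewise plus Fatou in time for the $L^p$, $L^q$ powers; weak lower semicontinuity of $\|K_\cdot u_\cdot-f\|^2$ by convexity and the continuity of $K$ from $L^1$, noting $u_n\to u$ strongly in $L^1$ in space a.e. in time) gives $J(u,\boldsymbol{v})\le\liminf J(u_n,\boldsymbol{v}_n)$, so $(u,\boldsymbol{v})$ is a minimizer.
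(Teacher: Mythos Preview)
Your overall strategy coincides with the paper's: direct method, coercivity via Poincar\'e--Wirtinger combined with $K\boldsymbol{1}\neq 0$ to control the spatial mean of $u$, Aubin--Lions to upgrade $u_n$ to strong convergence using the time regularity on $u_t$ inherited from the constraint, and passage to the limit in the optical flow constraint after moving the gradient off $u_n$ by integration by parts. The lower semicontinuity step is also as in the paper. One small inaccuracy: the bound on $u_t$ that the paper derives (its Lemma~\ref{lemma:bounddtu}) lands in a genuine Lebesgue space $L^{\frac{ps}{p+s-1}}(0,T;L^{\frac{2k}{k+1}}(\Omega))$, not a negative-order space; this matters for the choice of $\mathcal Z$ in Aubin--Lions.

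There is, however, one genuine gap. In the second piece $\int u_n\,\boldsymbol{v}_n\cdot\nabla\varphi$ you assert that $\boldsymbol{v}_n\to\boldsymbol{v}$ strongly in $L^{q'}(0,T;L^1(\Omega))^2$, arguing by ``$BV$ compactness slicewise plus the $L^\infty$ bound'' and dominated convergence ``once we have a.e.\ convergence''. But you do not have a.e.\ convergence: slicewise compactness of $BV(\Omega)\hookrightarrow L^1(\Omega)$ only gives, for each fixed $t$, a $t$-dependent subsequence converging in $L^1(\Omega)$; without any time regularity on $\boldsymbol{v}_n$ --- and the constraint supplies none for $\boldsymbol{v}$ --- you cannot patch these together into strong convergence in any $L^r(0,T;L^1(\Omega))$. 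The paper's Lemma~\ref{convergenceConstraint} avoids this entirely: it never asks for strong convergence of $\boldsymbol{v}_n$, using instead only weak-$*$ convergence of $\boldsymbol{v}_n$ in $L^\infty(\Omega_T)$ paired against $u\nabla\varphi\in L^1(\Omega_T)$, together with the strong convergence of $u_n$ that you already obtained via Aubin--Lions. Concretely, split
\[
u_n\boldsymbol{v}_n-u\boldsymbol{v}=(u_n-u)\boldsymbol{v}_n+u(\boldsymbol{v}_n-\boldsymbol{v});
\]
the first term vanishes by strong convergence of $u_n$ in $L^p(0,T;L^r(\Omega))$ and the uniform $L^\infty$ bound on $\boldsymbol{v}_n$, the second by weak-$*$ convergence of $\boldsymbol{v}_n$ in $L^\infty$ against an $L^1$ function. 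With this correction your argument goes through and matches the paper's.
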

The proof theorem \ref{theorem:existenceMinimizerTVTVmodel} is based on an application of weak lower semi-continuity and compactness techniques. It follows from the following three properties verified in the next sections:
\begin{enumerate}
	\item Weak-star compactness of sublevel sets (coercivity),
	\item Weak-star lower semi continuity,
	\item Closedness of the constraint set via convergence in a distributional sense.
\end{enumerate}
\vspace{0.5cm}
\textbf{Comment:} For a time-dependent linear operator $K$, most arguments can be used in an analogue fashion. The proof even simplifies if the stronger regularity assumption $\|Ku-f\|_{L^2\times[0,T]}$ holds, since we do not have to start our argumentation from single time steps following with boundedness for their time integral.

\subsection{Coercivity and lower-semi continuity}
We mention that coercivity and lower semi continuity are independent of the constraint.

\begin{lemma}{Coercivity}\\
	Let $\hat{p} = \min\left\{p,2\right\}$,  $K\boldsymbol{1}\neq 0$, and $(u,\boldsymbol{v})$ be such that
	\begin{align*}
J(u,\boldsymbol{v})\leq\nu, \qquad 	\left\|\boldsymbol{v}\right\|_\infty \leq c_\infty 
	\end{align*}
	Then there exists $c\in\R{}$ such that
	\begin{align*}
	\left\|u\right\|_{L^{\hat{p}}(0,T;BV(\Omega))}\leq c,\quad \left\|\boldsymbol{v}\right\|_{L^q(0,T;BV(\Omega))}\leq c,
	\end{align*}
	and consequently, the sublevel set $\mathcal{S}_{\nu}$ (see \ref{def:sublevelset}) is not empty and compact in the weak-star topology of $L^{\hat{p}}(0,T;BV(\Omega))\times L^q(0,T;BV(\Omega))^2$.
	\label{lemma:coercivitytvtvgeneralmodel}
\end{lemma}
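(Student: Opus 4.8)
The plan is to extract boundedness of the norms $\|u\|_{L^{\hat p}(0,T;BV)}$ and $\|\boldsymbol v\|_{L^q(0,T;BV)}$ from the bound $J(u,\boldsymbol v)\le\nu$, and then invoke the standard weak-star compactness of sublevel sets in these Bochner-type spaces. The bound on $\boldsymbol v$ is the easy half: the third term in $J$ controls $\int_0^T |\boldsymbol v(\cdot,t)|_{BV}^q\,dt$, so together with the pointwise bound $\|\boldsymbol v\|_\infty\le c_\infty$ (which gives $\int_0^T\|\boldsymbol v(\cdot,t)\|_{L^1}^q\,dt\le |\Omega|^q c_\infty^q T$) we immediately get a bound on the full $L^q(0,T;BV(\Omega))$ norm. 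The analogous argument for $u$ is where the real work lies, because $J$ only directly controls $\int_0^T |u(\cdot,t)|_{BV}^p\,dt$, i.e. the seminorm, and one still needs an $L^1$-in-space bound on $u$ at (almost) every time.

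First I would handle the $L^1$-mass bound for $u$. The data term $\tfrac12\|K_tu(\cdot,t)-f(\cdot,t)\|_{\mathcal F}^2$ being integrable in $t$ gives, for a.e. $t$, that $K_t u(\cdot,t)$ lies in a bounded (in $\mathcal F$) neighbourhood of $f(\cdot,t)$. The hypothesis $K\boldsymbol 1\neq 0$ is exactly what lets me decompose $u(\cdot,t)$ into its mean $\bar u(t) = \fint_\Omega u(\cdot,t)$ and a zero-mean remainder: the Poincaré–Wirtinger inequality on $BV(\Omega)$ bounds $\|u(\cdot,t)-\bar u(t)\|_{L^1}$ (and even $\|u-\bar u\|_{L^{2}}$ in 2D by Sobolev embedding $BV(\Omega)\hookrightarrow L^2(\Omega)$) by $C|u(\cdot,t)|_{BV}$, while $K_t(\bar u(t)\boldsymbol 1) = \bar u(t)\,K_t\boldsymbol 1$ has $\mathcal F$-norm comparable to $|\bar u(t)|$ since $\|K_t\boldsymbol 1\|_{\mathcal F}$ is bounded below (this is where $K\boldsymbol 1\neq 0$ enters, possibly uniformly in $t$ by the assumptions on $K$). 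Combining, $|\bar u(t)| \lesssim \|f(\cdot,t)\|_{\mathcal F} + \|K_tu(\cdot,t)-f(\cdot,t)\|_{\mathcal F} + |u(\cdot,t)|_{BV}$, hence $\|u(\cdot,t)\|_{L^1} \lesssim |u(\cdot,t)|_{BV} + \|K_tu(\cdot,t)-f(\cdot,t)\|_{\mathcal F} + \|f(\cdot,t)\|_{\mathcal F}$ pointwise in $t$.

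Next I would integrate in time with the right exponent. Raising the pointwise estimate to the power $\hat p=\min\{p,2\}$ and integrating over $[0,T]$: the term with $|u(\cdot,t)|_{BV}^{\hat p}$ is controlled since $\hat p\le p$ and $[0,T]$ has finite measure (Jensen/Hölder), the term with $\|K_tu-f\|_{\mathcal F}^{\hat p}$ is controlled since $\hat p\le 2$ and the data term bounds its $L^2$-in-time norm, and the $f$ term is a fixed finite constant (assuming $f\in L^2(0,T;\mathcal F)$, which is implicit for $J$ to be finite at all). This yields $\|u\|_{L^{\hat p}(0,T;BV(\Omega))}\le c$. The choice $\hat p=\min\{p,2\}$ is dictated precisely by this interpolation: one cannot do better than exponent $2$ because that is all the quadratic data term gives, and one cannot exceed $p$ because that is all the regularizer gives.

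Finally, with both norm bounds in hand, the compactness conclusion follows from standard functional analysis: $BV(\Omega)$ is the dual of a separable Banach space, so closed balls in $L^{\hat p}(0,T;BV(\Omega))$ and $L^q(0,T;BV(\Omega))^2$ (for $\hat p,q>1$) are weak-star sequentially compact, and $J$ being finite somewhere (e.g. at a smooth pair satisfying the constraints, or one may simply assume $\mathcal S_\nu\neq\emptyset$) makes $\mathcal S_\nu$ nonempty; the sublevel set sits inside such a ball. The main obstacle is the $L^1$-mass estimate for $u$ — making rigorous that $K\boldsymbol 1\neq 0$ gives a lower bound $\|K_t\boldsymbol 1\|_{\mathcal F}\ge c_0>0$ uniformly (or almost-everywhere) in $t$, and correctly splitting off the mean — after which everything is bookkeeping with Hölder's inequality and the finite measure of $[0,T]$.
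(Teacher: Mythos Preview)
Your proposal is correct and follows essentially the same route as the paper: decompose $u(\cdot,t)$ into its spatial mean and a zero-mean remainder, control the remainder by Poincar\'e--Wirtinger and the mean via $K\boldsymbol 1\neq 0$ together with the data term, then integrate in time with exponent $\hat p=\min\{p,2\}$; the bound on $\boldsymbol v$ and the weak-star compactness via Banach--Alaoglu are handled identically. The only cosmetic difference is that the paper carries out the mean estimate through an explicit quadratic inequality for $\|K\bar u\|_{L^2}$ rather than a one-line triangle-inequality argument, but the content is the same.
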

\begin{proof}
	 We begin with the bound for $u$ and have to prove that for arbitrary $u\in L^p(0,T;BV(\Omega))$ with $J(u,\cdot)\leq\nu$ we have 
	\begin{align}
	\left\|u\right\|_{L^{\hat{p}}(0,T;BV(\Omega))}^{\hat{p}} \leq 2^{\hat{p}-1} \left(\int_0^T \left\|u\right\|_{L^1(\Omega)}^{\hat{p}}\dif t + \int_0^T\left|u\right|_{BV(\Omega)}^{\hat{p}}\dif t\right)\leq c\label{sumCoercivityU}.
	\end{align}
	To deduce this bound we need to estimate each of the two terms in the last line of the inequality. \\
	Since all three terms in energy \eqref{tvl2generalEnergy} are positive, from $J(u,\boldsymbol{v})\leq\nu$ we directly get a bound on each of the three parts. It follows that 
	\begin{align*}
	\left\|Ku-f\right\|_{L^2(0,T;L^2(\Omega))}\leq\nu,
	\end{align*}
	which naturally implies 
	\begin{align*}
	\left(Ku(\cdot,t)-f(\cdot,t)\right)\in L^2(\Omega) \quad \text{ a.e. in }\left[0,T\right].
	\end{align*}
	Consequently, $\left\|Ku(\cdot,t)-f(\cdot,t)\right\|_{L^2(\Omega)}$ is bounded almost everywhere in $t\in\left[0,T\right]$ and we define
	\begin{align*}
	c_K(t) := \left\|Ku(\cdot,t)-f(\cdot,t)\right\|_{L^2(\Omega)}.
	\end{align*}
	We want to emphasize here that $c_K(t)$ gives a constant for every time step $t\in\left[0,T\right]$, but the integral $\int_0^T c_K^{\hat{p}}\dif t$ is only bounded for $1< {\hat{p}}\leq 2$ due to the $L^2$-regularity in time.\\
	Proceeding now to Equation \eqref{sumCoercivityU} we directly get from $J(u,\cdot)\leq\nu$ that
	\begin{align*}
	\int_0^T\left|u\right|_{BV(\Omega)}^p\dif t =\int_0^T TV(u)^p\dif t\leq\nu.
	\end{align*}
	Consequently, the crucial point is to find a bound for $\left\|u\right\|_{L^p(0,T;L^1(\Omega))}$. Let $t\in\left[0,T\right]$ be an arbitrary time step. First, we deduce a bound for this single time step $\left\|u(\cdot,t)\right\|_{L^1(\Omega)}$ and start with a decomposition for $u$: 
	\begin{align*}
	\bar{u}=\frac{1}{\left|\Omega\right|}\int_\Omega u(x,t)\dif x,\quad u_0=u(\cdot,t)-\bar{u}.
	\end{align*}
	From this definition it follows directly that $u_0$ fulfills 
	\begin{align*}
	\int_\Omega u_0\dif x=0\quad\text{(mean value zero)},
	\end{align*}
	and $TV(u(\cdot,t))=TV(u_0)\leq\nu$. Using the Poincar\'{e}-Wirtinger inequality \cite{meyers1977integral} we obtain an $L^2$-bound for $u_0$:
	\begin{align*}
	\left\| u_0\right\|_{L^2(\Omega)}\leq c_3 TV(u_0)\leq c_3 \nu,
	\end{align*}
	where $c_1,c_2$ and $c_3$ are positive constants. Moreover, we need a bound for $\left\|K\bar{u}\right\|_{L^2(\Omega)}$, which we get by calculating
	\begin{align*}
	\left\|K\bar{u}\right\|_{L^2}^2 - 2\left\|K\bar{u}\right\|_{L^2}\left(\left\|K\right\|\left\|u_0\right\|_{L^2}+\left\|f\right\|_{L^2}\right)
	&\leq \left\|K\bar{u}\right\|_{L^2}^2-2\left\|Ku_0-f\right\|_{L^2}\left\|K\bar{u}\right\|_{L^2} \\
	&\leq \left\|Ku_0-f\right\|_{L^2}^2+\left\|K\bar{u}\right\|_{L^2}^2-2\left\|Ku_0-f\right\|_{L^2}\left\|K\bar{u}\right\|_{L^2}\\
	&= \left( \left\|Ku_0-f\right\|_{L^2}-\left\|K\bar{u}\right\|_{L^2}\right)^2 \leq \left\|Ku_0 + K\bar{u}-f\right\|_{L^2}^2 \\
	&= \left\|Ku(\cdot,t)-f(\cdot,t)\right\|_{L^2}^2\leq c_K(t)^2.
	\end{align*}
	Defining $x:=\left\|K\bar{u}\right\|_{L^2(\Omega)},a:=\left\|K\right\|\left\|u_0\right\|_{L^2(\Omega)}+\left\|f\right\|_{L^2(\Omega)}$, we get the simple quadratic inequality
	\begin{align}
	\label{quadratischeGleichung1}
	x^2-2xa\leq c_K(t)^2
	\end{align}
	and furthermore know
	\begin{align*}
	0\leq a \leq \left\|K\right\|c_3 \nu+\left\|f\right\|_{L^2(\Omega)}=:c_4.
	\end{align*}
	Plugging this into the quadratic inequality (\ref{quadratischeGleichung1}) yields the solution
	\begin{align*}
	0\leq x \leq c_4+\sqrt{\nu+c_4^2} \leq c_4 + c_7 (c_K(t)+c_4).
	\end{align*}
	The assumption $K\boldsymbol{1}\neq 0$ leads to an estimate for the operator
	\begin{align*}
	&\left\|K\bar{u}\right\|_{L^2(\Omega)} = \left|\frac{1}{\left|\Omega\right|}\int_\Omega u\dif x\right|\left\|K\boldsymbol{1}\right\|_{L^2(\Omega)}\leq c_4+\sqrt{\nu+c_4^2} \\
	&\Leftrightarrow \left|\frac{1}{\left|\Omega\right|} \int_\Omega u\dif x\right| \leq \frac{c_4 + c_7 (c_K(t)+c_4)}{\left\|K\boldsymbol{1}\right\|_{L^2(\Omega)}} =: c_5(t).
	\end{align*}
	We are now able to bound the $L^1$-norm of a single time step $t\in\left[0,T\right]$ by a constant $c_u(t)$ as follows:
	\begin{align*}
	0&\leq\left\|u(\cdot,t)\right\|_{L^1(\Omega)} \leq c_6\left\|u(\cdot,t)\right\|_{L^2(\Omega)} = c_6\left\|u_0+\frac{1}{\left|\Omega\right|}\int_\Omega u(x,t)\dif x  \right\|_{L^2(\Omega)} \\
	&\leq c_6 \left( \left\|u_0\right\|_{L^2(\Omega)}  + \left|\frac{1}{\left|\Omega\right|} \int_\Omega u(x,t)\dif x\right|\right) 
	\leq c_6 \left( c_3\nu + c_5(t)\right)=: c_u(t).
	\end{align*}
	Since we are integrating over all these constants $c_u(t)$ and the integral is only bounded for $1< \hat{p} \leq 2$, we see that the assumption on $\hat{p}$ and $p$ is crucial. Consequently, we have
	\begin{align*}
	\int_0^T \left\|u(\cdot,t)\right\|_{L^1(\Omega)}^{\hat{p}}\dif t \leq \int_0^T c_u(t)^{\hat{p}}\dif t \leq c_M.
	\end{align*}
	Combining both estimates we conclude with the required bound for arbitrary $u\in L^{\hat{p}}(0,T;BV(\Omega))$:
	\begin{align*}
	\left\|u\right\|_{L^{\hat{p}}(0,T;BV(\Omega))}^{\hat{p}} = \int_0^T \left\|u\right\|_{BV(\Omega)}^{\hat{p}} \dif t 
	\leq \int_0^T \left\|u\right\|_{L^1(\Omega)}^{\hat{p}}\dif t + \int_0^T\left|u\right|_{BV(\Omega)}^{\hat{p}}\dif t\leq c_MT +\nu.
	\end{align*}
	A bound for $\boldsymbol{v}$ is easier to establish, since we have $\left\|\boldsymbol{v}\right\|_{L^\infty(\Omega)}\leq c_\infty$ (see Equation \ref{natuerlicheSchrankeV}) almost everywhere. Similar to $u$, from $J(u,\boldsymbol{v})\leq\nu$  we obtain the a-priori bound
	\begin{align*}
	\int_0^T\left|\boldsymbol{v}(\cdot,t)\right|_{BV(\Omega)}^{q}\dif t\leq\nu
	\end{align*}
	for $\boldsymbol{v}$ from Equation ($\ref{tvl2generalEnergy}$). We calculate the bound for $\boldsymbol{v}$ directly as
	\begin{align*}
	\left\|\boldsymbol{v}\right\|_{L^q(0,T;BV(\Omega))}^q &= \int_0^T \left\|\boldsymbol{v}(\cdot,t)\right\|_{BV(\Omega)}^q \dif t \leq \int_0^T \left\|\boldsymbol{v}(\cdot,t)\right\|_{L^1(\Omega)}^q\dif t + \int_0^T\left|\boldsymbol{v}(\cdot,t)\right|_{BV(\Omega)}^{q}\dif t\\
	&\leq \int_0^T c_{\boldsymbol{v}}^q\left|\Omega\right|\dif t +\nu = c_{\boldsymbol{v}}^q\left|\Omega\right|T+\nu,
	\end{align*}
	where we have used the L$^\infty$-bound on $\boldsymbol{v}$. Combining the bounds for $u$ and $\boldsymbol{v}$,  we conclude with an application of the Banach-Alaoglu Theorem (see for example \cite{rudin1973functional}), which yields the required compactness result in the weak-star topology:\\
	It can be shown that $BV(\Omega)$ is the dual space of a Banach space $\CY$ (see \cite{burgerlevel2008}). From duality theory of Bochner spaces (cf. \cite{cengiz1998dual}) we get
	\begin{align*}
	L^{\hat{p}}(0,T;BV(\Omega)) = L^{p^*}(0,T;\CY(\Omega))^*,
	\end{align*}
	where $p^*$ is the H{\"o}lder-conjugate of p. With the same argumentation we get
	\begin{align*}
	L^q(0,T;BV(\Omega)) = L^{q^*}(0,T;\CY(\Omega))^*.
	\end{align*}
	Since both spaces are duals, an application of the Banach-Alaoglu Theorem yields the compactness in the weak-star topology.
\end{proof}

\begin{lemma}
	\label{lsctvl2generalmodel}
The functional $J$ is lower semi continuous with respect to the weak-star topology of 	$L^{\hat{p}}(0,T;BV(\Omega)) \times L^q(0,T;BV(\Omega))^2$.
\end{lemma}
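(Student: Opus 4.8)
The plan is to split $J$ into its three nonnegative summands and show that each is weak-star sequentially lower semi continuous on the relevant Bochner space; the claim then follows because the limit inferior of a sum dominates the sum of the limits inferior. So fix $(u_k,\boldsymbol{v}_k)\rightharpoonup^*(u,\boldsymbol{v})$ in $L^{\hat{p}}(0,T;BV(\Omega))\times L^q(0,T;BV(\Omega))^2$. If $\liminf{k} J(u_k,\boldsymbol{v}_k)=+\infty$ there is nothing to prove; otherwise, after passing to a subsequence (not relabelled) along which $J(u_k,\boldsymbol{v}_k)$ converges to this finite value, we have $J(u_k,\boldsymbol{v}_k)\le\nu$ for some $\nu<\infty$ and all $k$, and by nonnegativity of the three terms this already yields $\|Ku_k-f\|_{L^2(0,T;\mathcal{F})}^2\le2\nu$. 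We also recall from the proof of Lemma~\ref{lemma:coercivitytvtvgeneralmodel} that $BV(\Omega)=\CY(\Omega)^*$ for a separable Banach space $\CY(\Omega)$ and that $L^r(0,T;BV(\Omega))=(L^{r^*}(0,T;\CY(\Omega)))^*$; these duality facts are all we borrow from there.

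For the two regularisation terms the argument is the same, so take $\boldsymbol{v}\mapsto\int_0^T|\boldsymbol{v}(\cdot,t)|_{BV}^q\,\dif t$. By its dual definition $|\cdot|_{BV}$ on $BV(\Omega)$ is the pointwise supremum of the linear maps $w\mapsto\int_\Omega w\,(\nabla\cdot\varphi)\,\dif x$ over $\varphi\in C_c^1(\Omega;\R{2})$, $\|\varphi\|_\infty\le1$, and these are weak-star continuous; hence $|\cdot|_{BV}$ is convex and weak-star lower semi continuous, and post-composition with the convex nondecreasing function $s\mapsto s^q$ on $[0,\infty)$ preserves both properties. Writing $|\cdot|_{BV}^q$ as a supremum of weak-star continuous affine minorants $w\mapsto\langle\ell_n,w\rangle_{\CY,BV}+c_n$ (countably many, by separability of $\CY(\Omega)$) and using a measurable selection $t\mapsto n(t)$ to approximate the pointwise supremum, one identifies $\boldsymbol{v}\mapsto\int_0^T|\boldsymbol{v}(\cdot,t)|_{BV}^q\,\dif t$ with the supremum, over such measurable selections, of the maps $\boldsymbol{v}\mapsto\int_0^T(\langle\ell_{n(t)},\boldsymbol{v}(\cdot,t)\rangle+c_{n(t)})\,\dif t$; each of these is weak-star continuous on $L^q(0,T;BV(\Omega))=(L^{q^*}(0,T;\CY(\Omega)))^*$ because $t\mapsto\ell_{n(t)}$ takes values in a bounded countable subset of $\CY(\Omega)$ and $(0,T)$ has finite measure. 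A supremum of weak-star continuous functionals is weak-star lower semi continuous, which is the assertion; the identical reasoning with exponent $p$ on $L^{\hat{p}}(0,T;BV(\Omega))$ (the functional being $+\infty$ off $L^p(0,T;BV(\Omega))$) handles the $u$-regulariser. Equivalently, one may cite a standard lower semi continuity result for integral functionals with nonnegative normal convex integrands on Bochner spaces.

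For the fidelity term the dimension restriction $\Omega\subset\R{2}$ is used. The Sobolev embedding $BV(\Omega)\hookrightarrow L^2(\Omega)$ is continuous, hence $(u_k)$ -- bounded in $L^{\hat{p}}(0,T;BV(\Omega))$ by Banach--Steinhaus -- is bounded in the reflexive space $L^{\hat{p}}(0,T;L^2(\Omega))$ (note $1<\hat{p}\le2$); a weakly convergent subsequence has limit $u$, as one sees by testing against $C_c^\infty(\Omega\times(0,T))$, which represents elements of both preduals, so $u_k\rightharpoonup u$ in $L^{\hat{p}}(0,T;L^2(\Omega))$. The operator $\mathcal{K}\colon L^{\hat{p}}(0,T;L^2(\Omega))\to L^{\hat{p}}(0,T;\mathcal{F})$, $(\mathcal{K}u)(t)=K_tu(\cdot,t)$, is bounded because $K\colon L^1(\Omega)\to\mathcal{F}$ is bounded and $L^2(\Omega)\hookrightarrow L^1(\Omega)$ on the bounded domain $\Omega$; being bounded and linear it is weak--weak continuous, so $\mathcal{K}u_k-f\rightharpoonup\mathcal{K}u-f$ in $L^{\hat{p}}(0,T;\mathcal{F})$. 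Since in addition $(\mathcal{K}u_k-f)$ is bounded in the Hilbert space $L^2(0,T;\mathcal{F})$ by the bound above, a further subsequence $(k_m)$ converges weakly there, and its limit -- which is also a weak limit in $L^{\hat{p}}(0,T;\mathcal{F})$ -- must equal $\mathcal{K}u-f$; in particular $\mathcal{K}u-f\in L^2(0,T;\mathcal{F})$. Weak lower semi continuity of $\|\cdot\|_{L^2(0,T;\mathcal{F})}^2$ then gives $\tfrac12\|\mathcal{K}u-f\|_{L^2(0,T;\mathcal{F})}^2\le\liminf{m}\tfrac12\|\mathcal{K}u_{k_m}-f\|_{L^2(0,T;\mathcal{F})}^2$ along $(k_m)$. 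As the regularisation inequalities hold along any subsequence and $(k_m)$ still realises the original limit inferior of $J$, adding the three estimates along $(k_m)$ produces $J(u,\boldsymbol{v})\le\liminf{k} J(u_k,\boldsymbol{v}_k)$.

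The only genuinely delicate point is the passage from the pointwise-in-$t$ weak-star lower semi continuity of the $BV$-power integrands to lower semi continuity of their time integrals, i.e. the measurable-selection argument interchanging the supremum over test fields with the integral in $t$; here the dual identification $L^r(0,T;BV(\Omega))=(L^{r^*}(0,T;\CY(\Omega)))^*$ and the separability of $\CY(\Omega)$ are exactly what is needed. The remaining ingredients -- the Sobolev embedding, boundedness of $\mathcal{K}$, convexity of the squared norm and the subsequence bookkeeping -- are routine, and, in line with the comment after Theorem~\ref{theorem:existenceMinimizerTVTVmodel}, the hypothesis $K\boldsymbol{1}\neq0$ is not used here: it enters only through the coercivity estimate.
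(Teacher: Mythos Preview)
Your proof is correct and considerably more detailed than the paper's. The paper disposes of the lemma in a few lines: it observes that norms, affine norms, and their powers with exponent $\geq 1$ are convex, hence weakly lower semi continuous; invokes reflexivity of $L^2$ for the data term; cites an external reference for weak-star lower semi continuity of the total variation (and asserts without further argument that this persists for powers $p>1$); and concludes by additivity of lower semi continuity. In particular, the paper does not explicitly address the step you flag as ``genuinely delicate'' --- lifting the pointwise-in-$t$ weak-star lower semi continuity of $|\cdot|_{BV}^p$ to lower semi continuity of the Bochner integral $\int_0^T|\cdot|_{BV}^p\,\dif t$ --- nor does it spell out how weak-star convergence in $L^{\hat p}(0,T;BV(\Omega))$ interacts with the data term, which lives in $L^2(0,T;\mathcal F)$.

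Your route fills exactly these gaps: the measurable-selection / normal-integrand argument for the regularisers, and the embedding $BV(\Omega)\hookrightarrow L^2(\Omega)$ together with a weak-convergence identification in $L^{\hat p}(0,T;L^2(\Omega))$ and $L^2(0,T;\mathcal F)$ for the fidelity term. The trade-off is that the paper's argument is short and leans on a citation, while yours is self-contained and makes the Bochner-space structure explicit. One small remark: in your sketch the claim that $t\mapsto\ell_{n(t)}$ takes values in a \emph{bounded} countable subset of $\CY(\Omega)$ is not automatic for arbitrary affine minorants; it is cleanest either to restrict first to simple selections with finitely many values (and pass to the limit), or --- as you yourself note --- to invoke directly the standard lower semi continuity theorem for integral functionals with nonnegative normal convex integrands, which subsumes the whole argument.
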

\begin{proof}
	Norms and affine norms as well as their powers with exponent larger equal to one are always convex. Convex functionals on Banach spaces can be proven to be weakly lower semi continuous Due to the reflexivity of $L^2$ we directly obtain weak-star lower semi continuity\\
	Furthermore, it can be shown \cite{burgerlevel2008} that the total variation is weak-star lower semi continuous This property holds for exponentials $p$ of TV satisfying $p> 1$. \\
	Lower semi continuity holds for sums of lower semi continuous functionals, which concludes the proof.
\end{proof}

\subsection{Convergence of the constraint}
For completing the existence prove we have to deduce closedness of the constraint set. Consider admissible sequences $u^n$ and $\boldsymbol{v}^n$ in a sub-level set of $J$. From the regularization we obtain boundedness and consequently weak$^*$ convergence 
\begin{align*}
u^n\rightharpoonup^* u, \quad\boldsymbol{v}^n\rightharpoonup^* \boldsymbol{v}.
\end{align*}
In this context, the most challenging point is to prove convergence (in at least a distributional sense) of the constraint 
\begin{align*}
(u_t)^n + \nabla u^n\cdot\boldsymbol{v}^n\rightarrow u_t + \nabla u\cdot\boldsymbol{v}.
\end{align*}
The major problem arises from the product $\nabla u^n\cdot\boldsymbol{v}^n$, which does not necessarily converge to the product of their individual limits. A counterexample can be found in \cite{tartar1983compensated}. To achieve convergence we need at least one of the factors to converge strongly, but this cannot be deduced from boundedness directly. A way out gives the Aubin-Lions Theorem \cite{AubinLion1,AubinLion2,AubinLion3} which yields a compact embedding
\begin{align*}
L^p(0,T;\CX)\subset\subset L^p(0,T;\CY),
\end{align*}
and hence strong convergence in $\CY$, if $u^n$ is bounded in $L^p(0,T;\CX)$ and $(u_t)^n$ is bounded in $L^r(0,T;\CZ)$ for some $r$ for Banach spaces $\CX\subset\subset\CY\hookrightarrow\CZ$. Applied to our case we set $\CX=BV(\Omega)$ and $\CY=L^r(\Omega)$. The first goal is to calculate a bound for $(u_t)^n$ in some Lebesgue space $L^r(0,T;\CZ)$, which is given by the following lemma:
\begin{lemma}{Bound for $u_t$}\\
	\label{lemma:bounddtu}
	Let $\Omega\subset\mathbb{R}^2,u\in L^p(0,T;BV(\Omega)), \boldsymbol{v}\in L^{q}(0,T;BV(\Omega))$ such that $u_t + \nabla u\cdot\boldsymbol{v} = 0$ and 
	\begin{align*}
	\left\|\boldsymbol{v}\right\|_\infty \leq c_\infty
	\end{align*}
	Let furthermore $\nabla\cdot\boldsymbol{v}\in L^{p^*s}(0,T;L^{2k}(\Omega))$ with $k>1,s>1$ and let $p^*$ denote the H{\"o}lder conjugate of $p$.
	Then we have
	\begin{align*}
	u_t \in L^{\frac{ps}{p+s-1}}(0,T;L^{\frac{2k}{k+1}}(\Omega))
	\end{align*}
	with uniform bounds.
\end{lemma}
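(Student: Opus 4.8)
The plan is to read off $u_t = -\nabla u\cdot\boldsymbol v$ from the constraint, interpreted in $\mathcal{D'}(\Omega_T)$, and then to replace the awkward product of the merely measure-valued spatial gradient $\nabla u$ with $\boldsymbol v$ by the distributional product rule $\nabla u\cdot\boldsymbol v = \nabla\cdot(u\boldsymbol v) - u\,\nabla\cdot\boldsymbol v$. This identity is admissible here because $u\boldsymbol v\in L^1(\Omega_T)^2$ and, as the estimate below shows, $u\,\nabla\cdot\boldsymbol v\in L^1(\Omega_T)$; hence $u_t = u\,\nabla\cdot\boldsymbol v - \nabla\cdot(u\boldsymbol v)$ in the sense of distributions and it suffices to bound the two terms on the right separately, with constants depending only on the sublevel parameter $\nu$ and on $c_\infty,c^*$.

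For the term $u\,\nabla\cdot\boldsymbol v$ I would exploit $\Omega\subset\R{2}$: the two-dimensional Sobolev embedding $BV(\Omega)\hookrightarrow L^2(\Omega)$ gives $\|u(\cdot,t)\|_{L^2(\Omega)}\le c\,\|u(\cdot,t)\|_{BV(\Omega)}$ for a.e.\ $t$, so $u\in L^p(0,T;L^2(\Omega))$ with controlled norm. Combining this with $\nabla\cdot\boldsymbol v(\cdot,t)\in L^{2k}(\Omega)$ and H{\"o}lder's inequality in space, using $\tfrac12+\tfrac1{2k}=\tfrac{k+1}{2k}$, yields $\|u(\cdot,t)\,\nabla\cdot\boldsymbol v(\cdot,t)\|_{L^{2k/(k+1)}(\Omega)}\le\|u(\cdot,t)\|_{L^2(\Omega)}\|\nabla\cdot\boldsymbol v(\cdot,t)\|_{L^{2k}(\Omega)}$. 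A second H{\"o}lder step in time, with $t\mapsto\|u(\cdot,t)\|_{L^2(\Omega)}\in L^p(0,T)$, $t\mapsto\|\nabla\cdot\boldsymbol v(\cdot,t)\|_{L^{2k}(\Omega)}\in L^{p^*s}(0,T)$ and the identity $\tfrac1p+\tfrac1{p^*s}=\tfrac{p+s-1}{ps}$ (recall $\tfrac1{p^*}=1-\tfrac1p$), gives $u\,\nabla\cdot\boldsymbol v\in L^{ps/(p+s-1)}(0,T;L^{2k/(k+1)}(\Omega))$ with norm bounded in terms of $\|u\|_{L^p(0,T;BV(\Omega))}$ and $\|\nabla\cdot\boldsymbol v\|_{L^{p^*s}(0,T;L^{2k}(\Omega))}$; these are uniformly bounded on sublevel sets of $J$ by Lemma \ref{lemma:coercivitytvtvgeneralmodel} together with $\|\boldsymbol v\|_\infty\le c_\infty$ and $\|\nabla\cdot\boldsymbol v\|_\Theta\le c^*$ via the embedding \eqref{eq:divergenceassumption}. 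So this dominant term already lies in exactly the asserted space with a uniform bound, and the two H{\"o}lder exponents match the ones in the statement precisely.

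The step I expect to be the main obstacle is the divergence term $\nabla\cdot(u\boldsymbol v)$: with $\boldsymbol v\in L^\infty(\Omega_T)$ and the embedding above one only gets $u\boldsymbol v\in L^p(0,T;L^2(\Omega)^2)$, whose distributional divergence is naturally controlled in $L^p(0,T;W^{-1,2}(\Omega))$ and not, by itself, in a Lebesgue space over $\Omega$, so some care is needed about the precise sense in which the nonlinear term $\nabla u\cdot\boldsymbol v$, and hence $u_t$, is understood (the genuine $L^{2k/(k+1)}(\Omega)$-regularity is really carried only by the leading term $u\,\nabla\cdot\boldsymbol v$). For the use that follows — the Aubin--Lions compactness argument — it is in any case enough to note that $\tfrac{ps}{p+s-1}\le p$ and that, in two dimensions, $L^{2k/(k+1)}(\Omega)\hookrightarrow W^{-1,2}(\Omega)$ (the dual of $W^{1,2}_0(\Omega)\hookrightarrow L^{2k/(k-1)}(\Omega)$, valid since $k>1$), so both contributions collect in $L^{ps/(p+s-1)}(0,T;W^{-1,2}(\Omega))$ with uniform bounds. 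Once this functional-analytic point is settled, all that remains is the H{\"o}lder/Sobolev bookkeeping above and tracking that every constant depends only on $\nu$, $c_\infty$ and $c^*$, which yields the claimed uniform bounds.
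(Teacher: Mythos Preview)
Your approach is essentially the paper's: both rewrite $u_t=-\nabla u\cdot\boldsymbol v$ via the product rule as $u\,\nabla\cdot\boldsymbol v-\nabla\cdot(u\boldsymbol v)$ (the paper does this by pairing with a test function $\varphi$ and expanding $\nabla\cdot(\boldsymbol v\varphi)$), bound the first piece in $L^{ps/(p+s-1)}(0,T;L^{2k/(k+1)}(\Omega))$ by H\"older together with the two-dimensional embedding $BV\hookrightarrow L^2$, and handle the second via $\|\boldsymbol v\|_\infty\le c_\infty$. Your hesitation about $\nabla\cdot(u\boldsymbol v)$ is well-founded: the paper's final step, which absorbs $\|\varphi\|_{L^{p^*}(0,T;W^{1,2})}$ into $\|\varphi\|_{L^{p^*s^*}(0,T;L^{2k^*})}$ citing $W^{1,2}\hookrightarrow L^{2k^*}$, uses that embedding in the wrong direction, so strictly both arguments only place $u_t$ in $L^{ps/(p+s-1)}(0,T;W^{-1,2}(\Omega))$ --- and, as you rightly note, this is precisely what the Aubin--Lions step (Theorem~\ref{theorem:compactEmbeddingFuerU}) actually requires, since $L^r(\Omega)\hookrightarrow W^{-1,2}(\Omega)$ for the relevant $r$.
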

\begin{proof}
	Our goal is to show that a sequence $u^n$ (we will omit the lower $n$ in the following), satisfying the optical flow equation, acts as a bounded linear functional in some Bochner-space, thus being an element of the corresponding dual space. 
	We write down the weak form of the optical flow equation with some test function $\varphi$
	\begin{align*}
	\left|\int_0^T\int_\Omega u_t\varphi\dif x \dif t\right| &= \left| \int_0^T\int_\Omega u\nabla\cdot(\boldsymbol{v}\varphi) \dif x \dif t\right|
	\underbrace{\leq}_{\text{H\"older}}  \int_0^T \left(\int_\Omega u^2 \dif x \right)^\frac{1}{2} \left( \int_\Omega (\nabla\cdot(\boldsymbol{v}\varphi))^2\dif x \right)^\frac{1}{2} \dif t  \\
	&\underbrace{\leq}_{\text{Mink.}} \int_0^T \left\|u\right\|_{L^2} \left[\left( \int_\Omega (\varphi\nabla\cdot \boldsymbol{v})^2\dif x \right)^\frac{1}{2} + \left( \int_\Omega (\boldsymbol{v}\cdot\nabla\varphi)^2\dif x \right)^\frac{1}{2} \right] \dif t\\
	&= \underbrace{\int_0^T \left\|u\right\|_{L^2} \left( \int_\Omega (\varphi\nabla\cdot \boldsymbol{v})^2\dif x \right)^\frac{1}{2} \dif t}_{(i)} + \underbrace{\int_0^T \left\|u\right\|_{L^2} \left( \int_\Omega (\boldsymbol{v}\cdot\nabla\varphi)^2\dif x \right)^\frac{1}{2}  \dif t}_{(ii)}
	\end{align*}
	Let us start with an estimate for part $(i)$, which we obtain after three subsequent applications of the H{\"o}lder inequality:
	\begin{align*}
	\int_0^T \left\|u\right\|_{L^2} \left( \int_\Omega (\varphi\nabla\cdot \boldsymbol{v})^2\dif x \right)^\frac{1}{2} \dif t \leq  \left\|u\right\|_{L^p(0,T;L^2)} \left\|\nabla\cdot\boldsymbol{v}\right\|_{L^{p^*s}(0,T;L^{2k})} \left\|\varphi\right\|_{L^{p^*s^*}(0,T;L^{2k^*})},
	\end{align*}
	for $p^*$ and $s$ as in the statement of the theorem. An estimate for part $(ii)$ follows with Cauchy-Schwarz and H{\"o}lder:
	\begin{align}
		\int_0^T \left\|u\right\|_{L^2} \left( \int_\Omega (\boldsymbol{v}\cdot\nabla\varphi)^2\dif x \right)^\frac{1}{2}  \dif t&\leq c_{\boldsymbol{v}}\int_0^T \left\|u\right\|_{L^2}  \left\|\varphi\right\|_{W^{1,2}} \dif t\notag
		\leq c_{\boldsymbol{v}} \left\|u\right\|_{L^p(0,T;L^2)} \left\|\varphi\right\|_{L^{p^*}(0,T;W^{1,2})}\notag
	\end{align}
	Combining these estimates we obtain
	\begin{align*}
	\left|\int_0^T\int_\Omega u_t\varphi\dif x \dif t\right| &\leq \left\|u\right\|_{L^p(0,T;L^2)} \left\|\nabla\cdot\boldsymbol{v}\right\|_{L^{p^*s}(0,T;L^{2k})} \left\|\varphi\right\|_{L^{p^*s^*}(0,T;L^{2k^*})} \\
	&+c_{\boldsymbol{v}} \left\|u\right\|_{L^p(0,T;L^2)} \left\|\varphi\right\|_{L^{p^*}(0,T;W^{1,2})}\\
	&\leq (\left\|\nabla\cdot\boldsymbol{v}\right\|_{L^{p^*s}(0,T;L^{2k})} + c_{\boldsymbol{v}}) \left\|u\right\|_{L^p(0,T;L^2)} \left\|\varphi\right\|_{L^{p^*s^*}(0,T;L^{2k^*})}.
	\end{align*}
	In the first inequality, we used the embedding 
	\begin{align*}
	W^{1,2}(\Omega)\hookrightarrow L^{2k^*}(\Omega),\quad\forall 1\leq k^*<\infty.
	\end{align*}
	The sum of the first terms is bounded because of the assumptions made above. A bound for $u$ follows again from the $BV(\Omega)$ embedding into $L^2(\Omega)$ and we conclude  
	\begin{align*}
	\left\langle u_t,\varphi \right\rangle := \int_0^T\int_\Omega u_t\varphi\dif x \dif t \leq  C \left\|\varphi\right\|_{L^{p^*s^*}(0,T;L^{2k^*})}.
	\end{align*}
	Thus, $u_t$ forms a bounded linear functional on $L^{p^*s^*}(0,T;L^{2k^*}(\Omega))$ and we end up with
	\begin{align*}
	u_t\in (L^{p^*s^*}(0,T;L^{2k^*}(\Omega)))^* = L^{\frac{ps}{p+s-1}}(0,T;L^{\frac{2k}{k+1}}(\Omega)).
	\end{align*}
\end{proof}

\vspace{1cm}

\begin{theorem}{Compact embedding for $u$}\\
	\label{theorem:compactEmbeddingFuerU}
	Let the assumptions of Lemma \ref{lemma:bounddtu} be fulfilled. Then the set
	\begin{align*}
	\left\{u \in L^p(0,T;BV(\Omega)): \Vert u \Vert \leq C, u_t+\nabla u\cdot\boldsymbol{v}=0 \right\}
	\end{align*}
	can be compactly embedded into
	$L^p(0,T;L^r(\Omega))$
	for $\frac{2k}{k+1}\leq r<2$, and $k$ as given in constraint $\eqref{eq:divergenceassumption}$.
	
\end{theorem}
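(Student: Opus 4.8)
The plan is to obtain the assertion from the Aubin--Lions--Simon compactness theorem applied to the triple of Banach spaces $\CX = BV(\Omega)$, $\CY = L^r(\Omega)$, $\CZ = L^{\frac{2k}{k+1}}(\Omega)$, for which we need $\CX\subset\subset\CY\hookrightarrow\CZ$. By hypothesis the set under consideration is bounded in $L^p(0,T;\CX)$, and by Lemma \ref{lemma:bounddtu} the associated time derivatives are bounded in $L^{\frac{ps}{p+s-1}}(0,T;\CZ)$; the bound is \emph{uniform} over the set, because the constant produced in that lemma depends only on the a priori bound $C$ on $\Vert u\Vert$ and on $c_\infty$ and $\Vert\nabla\cdot\boldsymbol{v}\Vert_{L^{p^*s}(0,T;L^{2k}(\Omega))}$. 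Since $(p-1)(s-1)\geq 0$ forces $\frac{ps}{p+s-1}\geq 1$ and $[0,T]$ has finite length, this gives in particular a uniform bound of the $u_t$ in $L^1(0,T;\CZ)$, which is all that Simon's theorem requires on the side of the time derivative (together with $p<\infty$, which holds here).

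It then remains to verify the embeddings. On the bounded Lipschitz domain $\Omega\subset\R{2}$ one has the continuous embedding $BV(\Omega)\hookrightarrow L^2(\Omega)$ and, by the compactness theorem for $BV$ functions (the $BV$ analogue of Rellich--Kondrachov), the compact embedding $BV(\Omega)\subset\subset L^r(\Omega)$ for every $1\leq r<2$; this supplies $\CX\subset\subset\CY$ precisely in the range $r<2$. For the second embedding, boundedness of $\Omega$ yields $L^r(\Omega)\hookrightarrow L^{\rho}(\Omega)$ whenever $\rho\leq r$, so the choice $\rho=\frac{2k}{k+1}$ gives $\CY\hookrightarrow\CZ$ exactly when $r\geq\frac{2k}{k+1}$. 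Hence for $\frac{2k}{k+1}\leq r<2$ all hypotheses of the Aubin--Lions--Simon theorem are met, and it yields relative compactness of the set in $L^p(0,T;L^r(\Omega))$, i.e. the claimed compact embedding.

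The point I expect to require the most care is not any of these embeddings, which are classical, but the choice of the correct version of the Aubin--Lions theorem: $\CX = BV(\Omega)$ is not reflexive and $\CZ$ is an $L^1$-type target, so the familiar formulation for reflexive spaces does not apply verbatim. Simon's generalization to arbitrary Banach spaces does cover it — it needs only the compact embedding $\CX\subset\subset\CY$, a uniform bound in $L^p(0,T;\CX)$ with $p<\infty$, and a uniform bound on the derivatives in $L^1(0,T;\CZ)$ — so the real work is checking that its precise hypotheses are satisfied rather than reconstructing a compactness proof. A minor secondary point is the bookkeeping needed to confirm that the constant from Lemma \ref{lemma:bounddtu} is genuinely independent of the individual element of the set; this rests on the uniform divergence bound encoded in assumption \eqref{eq:divergenceassumption} together with the velocity constraint $\Vert\boldsymbol{v}\Vert_\infty\leq c_\infty$.
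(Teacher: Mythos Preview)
Your proposal is correct and follows essentially the same route as the paper: set $\CX=BV(\Omega)$, $\CY=L^r(\Omega)$, $\CZ=L^{\frac{2k}{k+1}}(\Omega)$, use Lemma~\ref{lemma:bounddtu} for the time-derivative bound, verify $BV(\Omega)\subset\subset L^r(\Omega)\hookrightarrow L^{\frac{2k}{k+1}}(\Omega)$ for $\frac{2k}{k+1}\leq r<2$, and apply Aubin--Lions. Your additional remarks on non-reflexivity of $BV$ and on uniformity of the constant from Lemma~\ref{lemma:bounddtu} are well placed but do not change the argument, since the version of Aubin--Lions stated in the paper (Lemma~\ref{AubinLionLemma}) already covers arbitrary Banach spaces and any $q\geq 1$, and here $\tfrac{ps}{p+s-1}>1$ because $(p-1)(s-1)>0$.
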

\begin{proof}
	We have a natural a-priori estimate for $u$ in $L^p(0,T;BV(\Omega))$. We moreover deduced a bound for $u_t$ in $L^{\frac{ps}{p+s-1}}(0,T;L^{\frac{2k}{k+1}}(\Omega))$. Embeddings of $BV(\Omega)$ into $\CY=L^r(\Omega)$ are compact for $r<\frac{n}{n-1}$, where $n$ is the spatial dimension. Moreover, the embedding $L^r(\Omega)\hookrightarrow L^{\frac{2k}{k+1}}(\Omega)$ is continuous for $\frac{2k}{k+1} \leq r$. Combining this we see that embedding
	\begin{align*}
	BV(\Omega)\subset\subset L^r(\Omega)\hookrightarrow L^{\frac{2k}{k+1}}(\Omega),
	\end{align*}
	is fulfilled for all $\frac{2k}{k+1}\leq r<2$. An application of the Aubin-Lions lemma \ref{AubinLionLemma} yields the compact embedding
	\begin{align*}
	\left\{u : u \in L^p(0,T;BV(\Omega)), u_t+\nabla u\cdot\boldsymbol{v}=0 \right\}\subset\subset L^p(0,T;L^r(\Omega)).
	\end{align*}
\end{proof}

Another application of this fairly general result can be found in \cite{dirks}. With this compact embedding result we conclude with strong convergence for $u^n \rightarrow u$ in $ L^p(0,T;L^r(\Omega))$ and are now able to prove convergence of the product $\nabla u^n\cdot\boldsymbol{v}^n$, to the product of their individual limits $\nabla u\cdot\boldsymbol{v}$.
\vspace{5mm}
\begin{lemma}{Convergence of the constraint}\\
	\label{convergenceConstraint}
	Let $\Omega\subset\mathbb{R}^2,u^n\in L^p(0,T;BV(\Omega))$ and $\boldsymbol{v}^n\in L^{q}(0,T;BV(\Omega))^2$ be bounded sequences. Let furthermore the assumptions of Lemma \ref{lemma:bounddtu} be fulfilled. Then
	\begin{align*}
	(u_t)^n+\nabla u^n\cdot \boldsymbol{v^n} \rightharpoonup u_t+\nabla u\cdot \boldsymbol{v}
	\end{align*}
	in the sense of distributions.
\end{lemma}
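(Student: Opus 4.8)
The plan is to pass to the limit in the weak formulation of the optical flow constraint, handling the two summands $u_t$ and $\nabla u\cdot\boldsymbol{v}$ separately. For the linear term, note that from Lemma~\ref{lemma:bounddtu} the sequence $(u_t)^n$ is bounded in the reflexive Bochner space $L^{\frac{ps}{p+s-1}}(0,T;L^{\frac{2k}{k+1}}(\Omega))$, so up to a subsequence it converges weakly there; since $u^n\rightharpoonup^* u$, the limit is necessarily the distributional derivative $u_t$, and in particular $(u_t)^n\rightharpoonup u_t$ in $\mathcal{D}'(\Omega_T)$. This step is essentially bookkeeping.

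The crux is the nonlinear term $\nabla u^n\cdot\boldsymbol{v}^n$. Here I would exploit the compact embedding from Theorem~\ref{theorem:compactEmbeddingFuerU}, which gives (along a subsequence) strong convergence $u^n\to u$ in $L^p(0,T;L^r(\Omega))$ for some $\frac{2k}{k+1}\le r<2$. To make the product converge I would first rewrite the constraint in its divergence (adjoint) form against a test function $\varphi\in C_c^\infty(\Omega_T)$, using that $\nabla u^n\cdot\boldsymbol{v}^n = \nabla\cdot(u^n\boldsymbol{v}^n) - u^n\,\nabla\cdot\boldsymbol{v}^n$, so that
\begin{align*}
\int_0^T\!\!\int_\Omega (\nabla u^n\cdot\boldsymbol{v}^n)\,\varphi\,\dif x\,\dif t = -\int_0^T\!\!\int_\Omega u^n\,\boldsymbol{v}^n\cdot\nabla\varphi\,\dif x\,\dif t - \int_0^T\!\!\int_\Omega u^n\,(\nabla\cdot\boldsymbol{v}^n)\,\varphi\,\dif x\,\dif t.
\end{align*}
In both terms on the right the troublesome factor $\nabla u^n$ has been removed, so I only need $u^n$ to converge strongly (which it does), paired against something that converges weakly. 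For the first term, $\boldsymbol{v}^n$ is bounded in $L^\infty$ and converges weakly-$*$ (up to a subsequence $\boldsymbol{v}^n\rightharpoonup^*\boldsymbol{v}$ in $L^q(0,T;BV)^2$, hence weakly-$*$ in $L^\infty$ after extracting further), and $u^n\nabla\varphi\to u\nabla\varphi$ strongly in $L^{p^*s^*}$ or an appropriate conjugate space; pairing strong times weak passes to the limit. For the second term I would invoke assumption~\eqref{eq:divergenceassumption}: $\nabla\cdot\boldsymbol{v}^n$ is bounded in $L^{p^*s}(0,T;L^{k}(\Omega))$ and so converges weakly there along a subsequence, while $u^n\varphi\to u\varphi$ strongly in the predual $L^{ps'}(0,T;L^{k^*}(\Omega))$ by the strong convergence of $u^n$ and the exponent choices $k>2$, $s>1$ — again strong times weak. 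One has to check that the limit of $\nabla\cdot\boldsymbol{v}^n$ is indeed $\nabla\cdot\boldsymbol{v}$, which follows because weak limits commute with the distributional divergence.

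The main obstacle I expect is the careful matching of integrability exponents so that in each product one factor sits in a space and the other in its (pre)dual, with the strong-convergence space for $u^n$ compatible with both pairings simultaneously — in particular reconciling the exponent $r<2$ coming from the compact BV-embedding in two dimensions with the exponents $2k/(k+1)$ and $k^*$ demanded by the divergence term. This is where the hypotheses $k>2$ and $s>1$ in~\eqref{eq:divergenceassumption} are used, and it needs to be verified that the range $\frac{2k}{k+1}\le r<2$ is nonempty and large enough. Once the two summands are handled, I would assemble: passing to the limit term-by-term in $\int_0^T\!\int_\Omega\big((u_t)^n+\nabla u^n\cdot\boldsymbol{v}^n\big)\varphi = 0$ gives $\int_0^T\!\int_\Omega(u_t+\nabla u\cdot\boldsymbol{v})\varphi = 0$ for all $\varphi\in C_c^\infty(\Omega_T)$, i.e.\ convergence of the constraint in $\mathcal{D}'(\Omega_T)$ and, since the limit vanishes, closedness of the constraint set. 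Combined with Lemma~\ref{lemma:coercivitytvtvgeneralmodel} and Lemma~\ref{lsctvl2generalmodel} this completes the proof of Theorem~\ref{theorem:existenceMinimizerTVTVmodel} via the direct method.
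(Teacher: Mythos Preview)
Your approach is essentially the same as the paper's: both integrate by parts to replace $\nabla u^n\cdot\boldsymbol{v}^n$ by terms involving only $u^n$, $\boldsymbol{v}^n$, and $\nabla\cdot\boldsymbol{v}^n$ against $\varphi$ and $\nabla\varphi$, and then pair strong convergence of $u^n$ (from Theorem~\ref{theorem:compactEmbeddingFuerU}) with weak(-$*$) convergence of $\boldsymbol{v}^n$ and $\nabla\cdot\boldsymbol{v}^n$. The only minor differences are that the paper handles $(u_t)^n$ by the simpler direct integration by parts $\int(u_t)^n\varphi=-\int u^n\varphi_t$ rather than via reflexivity of the Bochner space, and it carries out explicitly the exponent matching you correctly identify as the delicate point (choosing $r\ge\max\{2k/(k+1),\,2k/(2k-1)\}$ so that both pairings are in duality).
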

\begin{proof}
	
	For the following proof let $\varphi\in C_0^\infty(\Omega), u^n\in L^p(0,T;BV(\Omega))$ and $\boldsymbol{v}^n\in L^q(0,T;BV(\Omega))^2$. For the time derivative we simply calculate
	\begin{align*}
	\int_0^T\int_\Omega\left((u_t)^n-u_t\right)\varphi\dif x\dif t = - \int_0^T\int_\Omega\left(u^n-u\right)\varphi_t\dif x\dif t \rightarrow 0.
	\end{align*}
	Since test functions are dense in the dual space of $u$, we directly obtain convergence from the weak convergence $u^n\rightharpoonup u$. For the second part we begin with an analogous argument and estimate 
	\begin{align*}
	-\int_0^T\int_\Omega\left(\nabla u^n\cdot \boldsymbol{v}^n - \nabla u\cdot \boldsymbol{v}\right)\varphi \dif x \dif t &= \int_0^T\int_\Omega u^n\nabla\cdot(\varphi \boldsymbol{v}^n) - u\nabla\cdot(\varphi \boldsymbol{v}) \dif x \dif t\\
	=\underbrace{\int_0^T\int_\Omega (u^n-u)\nabla\cdot(\varphi \boldsymbol{v}^n)\dif x \dif t}_{(i)} &+ \underbrace{\int_0^T\int_\Omega u\nabla\cdot(\varphi (\boldsymbol{v}^n-\boldsymbol{v})) \dif x \dif t}_{(ii)}.
	\end{align*}
	Part $(i)$ can be estimated as follows:
	\begin{align*}
	\int_0^T\int_\Omega (u^n-u)\nabla\cdot(\varphi \boldsymbol{v}^n)\dif x \dif t &\leq \left\| u^n-u \right\|_{L^p(0,T;L^r)} \left\|\varphi\nabla\cdot \boldsymbol{v}^n + \boldsymbol{v}^n \cdot\nabla\varphi \right\|_{L^{p^*}(0,T;L^{r^*})}\\
	\leq  \left\| u^n-u \right\|_{L^p(0,T;L^r(\Omega))}\cdot 
	&(\left\|\varphi\nabla\cdot \boldsymbol{v}^n \right\|_{L^{p^*}(0,T;L^{r^*}(\Omega))} + \left\| \boldsymbol{v}^n \cdot\nabla\varphi \right\|_{L^{p^*}(0,T;L^{r^*}(\Omega))})\\
	\leq  \left\| u^n-u \right\|_{L^p(0,T;L^r(\Omega))}\cdot
	&(\underbrace{ \|\varphi\|_{L^{p^*s^*}(0,T;L^{r^*}(\Omega))} \|\nabla\cdot \boldsymbol{v}^n \|_{L^{p^*s}(0,T;L^{r^*}(\Omega))}}_{(i.1)} + \underbrace{\left\| \boldsymbol{v}^n \cdot\nabla\varphi \right\|_{L^{p^*}(0,T;L^{r^*}(\Omega))}}_{(i.2)})
	\end{align*}
	$(i.1):$ $\varphi$ is a test function and therefore bounded. From the assumptions we also have $\nabla\cdot\boldsymbol{v}^n\in L^{p^*s}(0,T;L^{2k}(\Omega))^2$. Consequently, we have to prove
	\begin{align*}
	L^{p^*s}(0,T;L^{2k}(\Omega)) \hookrightarrow L^{p^*s^*}(0,T;L^{r^*}(\Omega)).
	\end{align*}
	In terms of the embedding theory of Lebesgue spaces we show that $2k\geq r^*$. At this point it is important to keep in mind that $r$ and $r^*$ are H{\"o}lder-conjugated and the embedding theorem for optical flow allows $\frac{2k}{k+1}\leq r < 2$. The condition $2k\geq r^*$, on the other hand, translates to $\frac{2k}{2k-1}\leq r$, which is smaller than 2 for all $k>1$. By taking $\max(\frac{2k}{k+1},\frac{2k}{2k-1})\leq r$ both on $r$ conditions are satisfied. This yields the required bound for $\nabla\cdot\boldsymbol{v}^n$ in $L^{p^*s^*}(0,T;L^{r^*}(\Omega))$.\\
	$(i.2):$ This part is bounded by a constant due to the boundedness of $\boldsymbol{v}^n$ and the characteristics of $\varphi$. Following the arguments for strong convergence of $u$ from above, we conceive that $(i)$ tends to zero.\\
	\vspace{5mm}
	Estimating part $(ii)$ again requires Lebesgue embedding theory, since
	\begin{align*}
	\int_0^T\int_\Omega u\nabla\cdot(\varphi (v^n-v)) \dif x \dif t &= \int_0^T\int_\Omega \underbrace{u \varphi \nabla\cdot (v^n-v)}_{(ii.1)} + \underbrace{u (v^n-v)\cdot\nabla\varphi}_{(ii.2)}\dif x \dif t.
	\end{align*}
	$(ii.1):$ Using Lebesgue embedding theory we show
	\begin{align*}
	L^p(0,T;BV(\Omega)) &\hookrightarrow L^p(0,T;L^{\frac{2k}{2k-1}}(\Omega)) = L^p(0,T;L^{(2k)^*}(\Omega)) \\
	&\hookrightarrow L^{\frac{ps}{ps-p+1}}(0,T;L^{(2k)^*}(\Omega)) = L^{(p^*s)^*}(0,T;L^{(2k)^*}(\Omega)).
	\end{align*}
	Consequently, $u\in L^{(p^*s)^*}(0,T;L^{(2k)^*}(\Omega))$, which is the dual of $L^{p^*s}(0,T;L^{2k}(\Omega))$. Due to the weak-star convergence of $\nabla\cdot\boldsymbol{v}^n$ part $(ii).1$ tends to $0$ as $n\rightarrow\infty$.\\
	$(ii.2):$ The boundedness of $\boldsymbol{v}$ gives us $\boldsymbol{v}\in L^{\infty}(\left[0,T\right]\times\Omega)$ and a-priori weak-star convergence. Consequently, we need $u\nabla\varphi \in L^1(\left[0,T\right]\times\Omega)$. Due to the compact embedding $BV(\Omega)\subset\subset L^1(\Omega)$ and $p>1$ we get $L^p(0,T;BV(\Omega)) \hookrightarrow L^1(0,T;L^1(\Omega))$.
	This gives us $u\in L^1(\left[0,T\right]\times\Omega)$ and since test functions are dense in $L^1$ we end up with the required $u\nabla\varphi \in L^1(\left[0,T\right]\times\Omega)$.

	Putting all arguments together we end up with convergence of the constraint
	\begin{align*}
	&\lim_{k\rightarrow\infty} \left|\int_0^T\int_\Omega\left(\nabla u^n\cdot \boldsymbol{v}^n - \nabla u\cdot \boldsymbol{v}\right)\varphi \dif x \dif t\right| 
	\leq  \ C \lim_{k\rightarrow\infty}\left\| u^n-u \right\|_{L^p(0,T;L^r)}\\ +& \lim_{k\rightarrow\infty}\left|\int_0^T\int_\Omega u \varphi \nabla\cdot (\boldsymbol{v}^n-\boldsymbol{v})  \dif x \dif t\right|
	 + \lim_{k\rightarrow\infty}\left|\int_0^T\int_\Omega u (\boldsymbol{v}^n-\boldsymbol{v})\cdot\nabla\varphi \dif x \dif t\right|=  0.
	\end{align*}
	
\end{proof}

\section{Primal-dual numerical realization}
\label{section:numericalFramework}
Similar to the analytical part, we illustrate the numerical realization of the joint TV-TV optical flow model. Numerical schemes for the other models can be derived with only minor changes. We refer to \cite{dirks} for details. The proposed energy \eqref{tvl2generalEnergy} is a constrained minimization problem. The constraints on $\boldsymbol{v}$ and $\nabla\cdot\boldsymbol{v}$ are technical assumptions for the analysis of the model, where the bounds can be chosen arbitrarily large. Therefore, we neglect them in the following in the numerical considerations. Introducing an $L^1$ penalty term for the optical flow constraint with additional weight $\gamma$ leads to the unconstrained joint minimization problem
\begin{align}
\argmin_{u,\boldsymbol{v}} &\int_0^T \frac{1}{2}\left\|Ku-f\right\|_2^2 + \alpha \left\|\nabla u\right\|_1 + \beta\left\|\nabla \boldsymbol{v}\right\|_1 + \gamma \left\|u_t+\nabla u\cdot\boldsymbol{v}\right\|_1 \dif t.
\label{tvtvOpticalFlowUnconstrained}
\end{align}
By taking into account the bounds to $\boldsymbol{v}$ and $\nabla\cdot\boldsymbol{v}$ the minimizer of \eqref{tvtvOpticalFlowUnconstrained} converges for $\gamma\rightarrow\infty$ to the minimizer of Theorem \ref{theorem:existenceMinimizerTVTVmodel}. \\
Due to the dependence of energy \eqref{tvtvOpticalFlowUnconstrained} on the product of $\nabla u\cdot \boldsymbol{v}$ the energy is non-linear and therefore non-convex. Moreover the involved L$^1$ norms are non-differentiable and we have linear operators acting on $u$ and $\boldsymbol{v}$. Hence, minimizing the energy is numerically challenging.\\ 
We propose an alternating minimization technique, switching between minimizing with respect to $u$ and with respect to $\boldsymbol{v}$, while fixing the other variable. This leads to the following two-step scheme
\begin{align}
u^{k+1} = \argmin_{u} &\int_0^T \frac{1}{2}\left\|Ku-f\right\|_2^2  + \alpha \left\|\nabla u\right\|_1 + \gamma \left\|u_t+\nabla u \cdot \boldsymbol{v}^k\right\|_1 \dif t\label{eq:simultanTVTVunconstrainedSubU},\\
\boldsymbol{v}^{k+1} = \argmin_{\boldsymbol{v}} &\int_0^T  \left\|u^{k+1}_t+\nabla u^{k+1}\cdot \boldsymbol{v}\right\|_1 + \frac{\beta}{\gamma}\left\|\nabla \boldsymbol{v}\right\|_1 \dif t, \label{eq:simultanTVTVunconstrainedSubV}
\end{align}
where each of the subproblems is now convex and a primal-dual algorithm \cite{chambolle2011first,pock2009algorithm} can be applied. 

\paragraph{Problem in $u$:}
Illustrating the problem in $u$, we have to solve a classical ROF-problem \cite{rudin1992nonlinear} coupled with an additional transport term arising from the optical flow component. Each of the terms contains an operator and is therefore dualized. We set
\begin{align*}
F(Cu) := \int_0^T \frac{1}{2}\left\|Ku-f\right\|_2^2 + \alpha \left\|\nabla u\right\|_1 + \gamma\left\|(D_t + v_1^kD_x + v_2^kD_y)u\right\|_1 \dif t,
\end{align*}
with an underlying operator 
\begin{align*}
Cu=\begin{pmatrix}K\\\nabla\\D_t + v_1^kD_x + v_2^kD_y\end{pmatrix}u
\end{align*}
We first write down the convex conjugate $F^*$ corresponding to $F$:
\begin{align*}
F^*(\boldsymbol{y}) = \int_0^T \frac{1}{2}\left\|y_1\right\|_2^2 + \langle y_1,f\rangle +\alpha \delta_{B(L^\infty)}(y_2/\alpha) +\gamma \delta_{B(L^\infty)}(y_3/\gamma) \dif t.
\end{align*}
This leads to the primal-dual problem:
\begin{align*}
\argmin_{u}\argmax_{\boldsymbol{y}} \int_0^T\langle Cu,\boldsymbol{y}\rangle -\frac{1}{2}\left\|y_1\right\|_2^2 - \langle y_1,f\rangle -\alpha \delta_{B(L^\infty)}(y_2/\alpha) -\gamma \delta_{B(L^\infty)}(y_3/\gamma) \dif t.
\end{align*}
Plugging this into the primal-dual algorithm yields the following iterative systems consisting mostly of proximity problems
\begin{align*}
\tilde{\boldsymbol{y}}^{l+1} &= \boldsymbol{y}^{l} + \sigma C(2u^{l} -u^{l-1})\\
y^{l+1}_1 &= \argmin_y\left\{\int_0^T\frac{1}{2}\left\|y-\tilde{y_1}^{l+1}\right\|_2^2 + \frac{\sigma}{2} \left\|y\right\|_2^2 + \sigma\langle y,f\rangle\dif t\right\}\\
y^{l+1}_2 &= \argmin_y\left\{\int_0^T\frac{1}{2}\left\|y-\tilde{y_2}^{l+1}\right\|_2^2 + \sigma\alpha \delta_{B(L^\infty)}(y/\alpha) \dif t\right\}\\
y^{l+1}_3 &= \argmin_y\left\{\int_0^T\frac{1}{2}\left\|y-\tilde{y_3}^{l+1}\right\|_2^2 + \sigma\gamma \delta_{B(L^\infty)}(y/\gamma) \dif t\right\}\\
u^{l+1} &= \argmin_u\left\{\int_0^T \frac{1}{2}\left\|u-(u^l - \tau C^T\boldsymbol{y}^{l+1} )\right\|_2^2 \dif t\right\}
\end{align*}
The subproblem for $y_1$ is a linear $L^2$ problem which has a direct solution. Both problems for $y_2$ and $y_3$ can be solved by projecting point-wise onto the unit ball with radius $\alpha$ respectively $\gamma$. This leads to the iterative scheme
\begin{align*}
\tilde{\boldsymbol{y}}^{l+1} &= \boldsymbol{y}^{l} + \sigma C(2u^{l} - u^{l-1})\\
y^{l+1}_1 &= \frac{\tilde{y}^{l+1}_1 - \sigma f}{\sigma+1}, y^{l+1}_2 = \pi_\alpha(\tilde{y_2}^{l+1}), y^{l+1}_3 = \pi_\gamma(\tilde{y_3}^{l+1})\\
u^{l+1} &= u^l - \tau C^T\boldsymbol{y}^{l+1}
\end{align*}

\paragraph{Problem in $\boldsymbol{v}$:}
The problem in $\boldsymbol{v}$ is a simple $L^1-TV$ optical flow problem. 
As a first step, we define $\lambda := \frac{\beta}{\gamma}$ and split out the regularizer by:
\begin{align*}
F(C\boldsymbol{v}) := \lambda\left\|\nabla \boldsymbol{v}\right\|_1, \text{ with } C\boldsymbol{v} := \begin{pmatrix}\nabla&0\\0&\nabla\end{pmatrix}\boldsymbol{v}.
\end{align*}
We calculate the convex conjugate of $F$ as:
\begin{align*}
F^*(\boldsymbol{y}) =  \lambda \delta_{B(L^\infty)}(\frac{\boldsymbol{y}}{\lambda}).
\end{align*}
Together with the optical flow term we receive the following primal-dual formulation:
\begin{align*}
\argmin_{\boldsymbol{v}}\argmax_{\boldsymbol{y}} \left\|u^{k+1}_t + \nabla u^{k+1} \cdot \boldsymbol{v}\right\|_1 + \left\langle C\boldsymbol{v},\boldsymbol{y} \right\rangle  - \lambda \delta_{B(L^\infty)}(\frac{\boldsymbol{y}}{\lambda}).
\end{align*}
Plugging this into the primal-dual algorithm yields the following problems
\begin{align*}
\boldsymbol{y}^{l+1} &=\argmin_{\boldsymbol{y}}\left\{\frac{1}{2}\left\|\boldsymbol{y}-(\boldsymbol{y}^l + \sigma C(2\boldsymbol{v}^{l} - \boldsymbol{v}^{l-1}))\right\|_2^2 + \lambda\sigma\delta_{B(L^\infty)}(\frac{\boldsymbol{y}}{\lambda}) \right\}\\
\boldsymbol{v}^{l+1} &= \argmin_{\boldsymbol{v}}\left\{ \frac{1}{2}\left\|\boldsymbol{v}- (\boldsymbol{v}^k+\tau C^T \boldsymbol{y}^{l+1})\right\|_2^2 + \tau\left\|u^{k+1}_t+\nabla u^{k+1}\cdot\boldsymbol{v}\right\|_1 \right\}
\end{align*}
Similar to $u$, the subproblem for $\boldsymbol{y}$ can be solved by point-wise projections onto the $L^2$ unit ball with radius $\lambda$. The proximal problem in $\boldsymbol{v}$ can be directly solved by an affine linear shrinkage formula. Therefore, we set
\begin{align*}
\rho(\boldsymbol{v}) := u^{k+1}_t + \nabla u^{k+1}\cdot \boldsymbol{v},\quad \boldsymbol{\beta} := (u^{k+1}_x,u^{k+1}_y).
\end{align*}
Then the solution is given by
\begin{align*}
\boldsymbol{v} = \tilde{\boldsymbol{v}}^{l+1} + \begin{cases}
\tau \boldsymbol{\beta} &\mbox{if } \rho(\tilde{\boldsymbol{v}}^{l+1}) < -\tau \left\|\boldsymbol{\beta}\right\|_2^2 \\  
-\tau \boldsymbol{\beta} &\mbox{if } \rho(\tilde{\boldsymbol{v}}^{l+1}) > \tau \left\|\boldsymbol{\beta}\right\|_2^2 \\
-\frac{\rho(\tilde{\boldsymbol{v}}^{l+1})}{\left\|\boldsymbol{\beta}\right\|_2^2} \boldsymbol{\beta} &\mbox{else }
\end{cases}.
\end{align*}
Combining both formulas we obtain the following scheme:
\begin{align*}
\boldsymbol{y}^{l+1} &= \pi_\lambda( \boldsymbol{y}^l + \sigma C(2\boldsymbol{v}^{l} - \boldsymbol{v}^{l-1}) )\\
\tilde{\boldsymbol{v}}^{l+1} &= \boldsymbol{v}^l-\tau C^T \boldsymbol{y}^{l+1}\\
\boldsymbol{v}^{l+1} &=  \tilde{\boldsymbol{v}}^{l+1} + \begin{cases}
\tau \boldsymbol{\beta} &\mbox{if } \rho(\tilde{\boldsymbol{v}}^{l+1}) < -\tau \left\|\boldsymbol{\beta}\right\|_2^2 \\  
-\tau \boldsymbol{\beta} &\mbox{if } \rho(\tilde{\boldsymbol{v}}^{l+1}) > \tau \left\|\boldsymbol{\beta}\right\|_2^2 \\
-\frac{\rho(\tilde{\boldsymbol{v}}^{l+1})}{\left\|\boldsymbol{\beta}\right\|_2^2} \boldsymbol{\beta} &\mbox{else }
\end{cases}
\end{align*}

\subsection{Discretization}
For the spatial regularization parts $\left\|\nabla u\right\|_1$ and $\left\|\nabla \boldsymbol{v}\right\|_1$ we use forward differences to discretize the involved gradient, respectively backward differences for the adjoint. The coupling term $\left\|u_t+\nabla_x u\cdot\boldsymbol{v}\right\|_1$ is the more challenging part. Using forward differences for the time derivative $u_t$ and central differences for the spatial derivatives $\nabla u$ yields a stable discretization of the transport equation, because the scheme is solved implicitly. Details can be found in Appendix \ref{section:appendixDiscretization} material.\\
As a stopping criterion for both minimization subproblems we use the primal-dual residual (see \cite{goldstein2013adaptive}) as a stopping criterion. For the alternating minimization we measure the difference between two subsequent iterations $k$ and $k+1$ by
\begin{align*}
err_{main} := \frac{\left| u^k-u^{k+1} \right| + \left| \boldsymbol{v}^k-\boldsymbol{v}^{k+1} \right|}{2\left|\Omega\right|}
\end{align*}
and stop if this difference falls below a threshold $\epsilon$. A pseudo code can be found in Appendix \ref{appendixAlgorithm}.

\section{Results}
\label{section:results}
The proposed main algorithm is implemented in MATLAB. For the subproblems in $u$ and $\boldsymbol{v}$ we use the optimization toolbox \textbf{FlexBox} \cite{dirks2016flexible}. The toolbox comes with a C$^{++}$ module, which greatly enhances the runtime. Code and toolbox and be downloaded \cite{dirks2016flexible}. In the following evaluation, the stopping criterion was chosen as $\epsilon=\epsilon_u=\epsilon_v=10^{-6}$. Furthermore, the weighting parameter $\gamma$ for the optical flow constraint in the joint model is set to 1 in all experiments. The evaluated parameter range for  $\alpha$ is the interval $\left[0.01,0.05 \right]$, whereas $\beta$ takes values in $\left[0.05,0.1 \right]$.

\subsection{Joint model versus static optical flow}
First of all we compare our joint TV-TV optical flow model with a TV-L$^1$ optical flow model on the Dimetrodon sequence from \cite{baker2011database} with increasing levels of noise. We use additive Gaussian noise with variance $\sigma\in[0,0.03]$. For the static optical flow model the parameter range [0,0.2]. Both plots from Figure \ref{fig:plotAEEvsNoise} indicate that our joint approach outperformes the static model especially in case of higher noise levels.

\subsection{Denoising and motion estimation}
To demonstrate the benefits of our model we compare the TV-TV optical flow model with different classical methods for image denoising and motion estimation, on different datasets. For image denoising we compare our joint model with a standard 2D ROF model \cite{rudin1992nonlinear} and with a modified 2D+t ROF model from \cite{schaeffer2013real}, which contains an additional time-regularization $\left\| u_t\right\|_1$ for the image sequence. Both regularizers are equally weighted. For motion estimation purposes we calculate the motion field with a TV-L$^1$ optical flow approach for noisy- and for previously TV-denoised image sequences.\\
Due to the limitations of our model to movements of small magnitude that arise from the first order Taylor expansion, we take datasets from the Middlebury optical flow database \cite{baker2011database} and scale down the available ground-truth flow to a maximum magnitude of 1. Afterwards, these downscaled flows are used to create sequences of images by cubic interpolation of $I_1(x+k\boldsymbol{v})$ (k represents the k-th consecutive image). The image sequence is then corrupted with Gaussian noise with variance $\sigma=0.002$. The weights for each algorithm are manually chosen to obtain the corresponding best results. \\
Table $\ref{tab:jointModelResultateUebersicht}$ contains the evaluation results. It becomes clear that our model outperforms both, the standard method for image denoising as well as the method for motion estimation significantly. The visualized results can be found in Figure $\ref{fig:uebersichtJointModelBilder}$. 
\begin{table}
	\centering
	\small
	\begin{tabular}{|c|c|l|l|l|l|l|}
		\hline
		\textbf{Dataset} & \textbf{Algorithm} & \textbf{SSIM} &  \textbf{PSNR} &  \textbf{AEE}&  \textbf{AE}\\
		\hline
		\multirow{4}{*}{Dimetrodon}  & Joint & \textbf{0.970} & \textbf{39.777} & \textbf{0.089} & \textbf{0.050} \\
		\cline{2-6}
		& ROF 2D & 0.949 & 36.862 & - &-  \\
		\cline{2-6}
		& ROF 2D+t & 0.966 & 38.752 & - &-  \\
		\cline{2-6}
		& OF Noisy & - & - & 0.131 & 0.075  \\
		\cline{2-6}
		& OF Denoised & - & - & 0.125 & 0.071  \\
		\hline
		\multirow{4}{*}{Hydrangea} & Joint & \textbf{0.949} & \textbf{37.309} & \textbf{0.031} & \textbf{0.018} \\
		\cline{2-6}
		& ROF 2D & 0.920 & 34.236 & - &- \\
		\cline{2-6}
		& ROF 2D+t & 0.943 & 35.966 & - &- \\
		\cline{2-6}
		& OF Noisy & - & - & 0.073 & 0.045 \\
		\cline{2-6}
		& OF Denoised & - & - & 0.062 & 0.038 \\
		\hline
		\multirow{4}{*}{Rubber Whale} & Joint & \textbf{0.949} & \textbf{37.309} & \textbf{0.031} & \textbf{0.018} \\
		\cline{2-6}
		& ROF 2D & 0.920 & 34.236 & - &- \\
		\cline{2-6}
		& ROF 2D+t & 0.934 & 36.240 & - &- \\
		\cline{2-6}
		& OF Noisy & - & - & 0.072 & 0.045 \\
		\cline{2-6}
		& OF Denoised & - & - & 0.062 & 0.038 \\
		\hline
	\end{tabular}
	\caption{Table comparing the joint motion estimation and image reconstruction model with classical models for image denoising and motion estimation.\textbf{ROF 2D:} Rudin Osher Fatemi model applied to single frames of the image sequence; \textbf{ROF 2D+t:} ROF model with additional time-regularization \textbf{OF Noisy:} TV-L$^1$ optical flow model applied to noisy images; \textbf{OF Denoised:} TV-L$^1$ optical flow model applied to previously TV-denoised images;}
	\label{tab:jointModelResultateUebersicht}
\end{table}

\begin{figure}
	\centering
	\subfloat{\includegraphics[width=.45\linewidth]{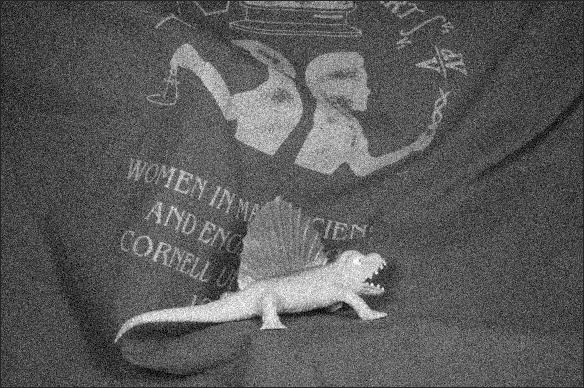}}\quad 
	\subfloat{\includegraphics[width=.45\linewidth]{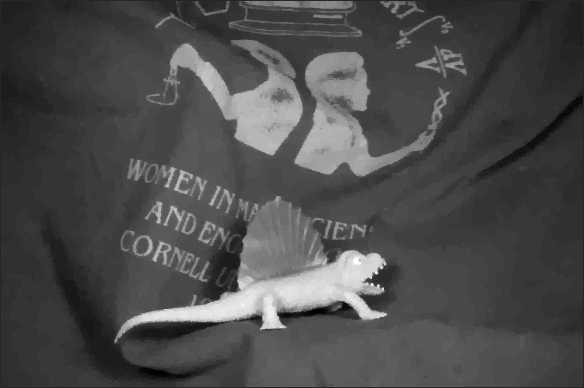}}\\
	\subfloat{\includegraphics[width=.45\linewidth]{flowGT.png}}\quad 
	\subfloat{\includegraphics[width=.45\linewidth]{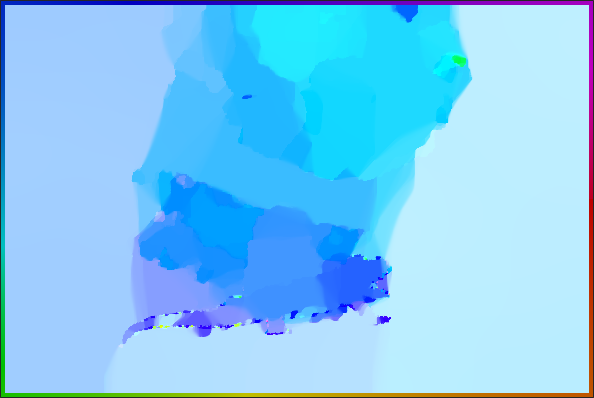}}
	\caption[Input images and corresponding reconstruction]{Top row shows one of the noisy input images and corresponding reconstruction. Bottom row shows ground-truth velocity field and TV-TV optical flow reconstruction. This reconstruction turned out to be the best one compared to other state of the art methods (cp. Table \ref{tab:jointModelResultateUebersicht}).}
	\label{fig:uebersichtJointModelBilder}
\end{figure}

\subsection{Temporal inpainting for real data}
As a real data application we choose the Hamburg Taxi sequence from H.-H. Nagel. Unfortunately, this image sequence has an underlying motion with a magnitude larger than one pixel. The model can be adjusted to this situation by adding additional frames between the known images and using the model to perform temporal inpainting. Therefore, the data fidelity term $\left\|Ku-f\right\|_2^2$ is evaluated only on known frames and the weight $\alpha$ for the total variation is set to zero for unknown frames. The resulting images, time-interpolants and velocity fields are visualized in Figure \ref{fig:temporalInpainting}. We zoom into the image to make differences between original and reconstruction better visible. The complete images can be found in Appendix \ref{section:appendixResults}. In terms of denoising both the car in front and the background are more homogeneous. Moreover, the model generated two time-interpolants and estimated the flow on the whole sequence.
\begin{figure}
	\centering
	\subfloat{\includegraphics[width=.18\linewidth]{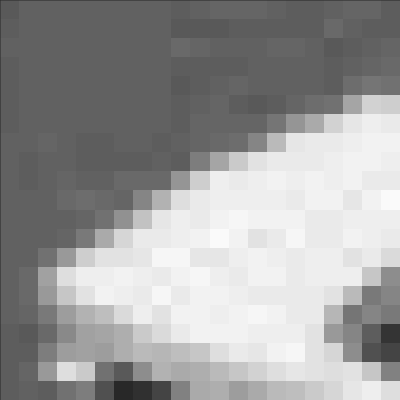}}\enskip
	\subfloat{\includegraphics[width=.18\linewidth]{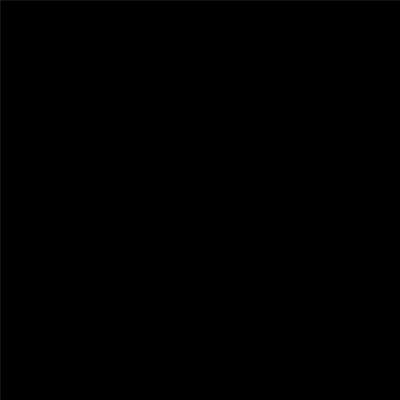}}\enskip
	\subfloat{\includegraphics[width=.18\linewidth]{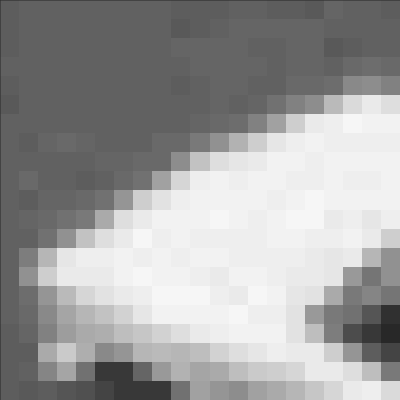}}\enskip
	\subfloat{\includegraphics[width=.18\linewidth]{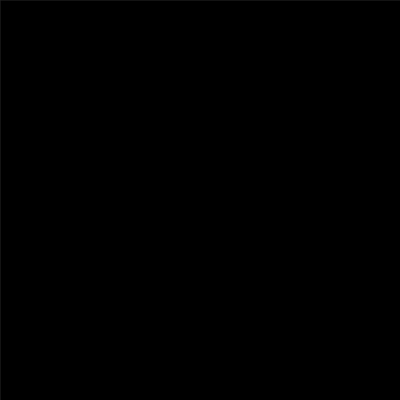}}\enskip
	\subfloat{\includegraphics[width=.18\linewidth]{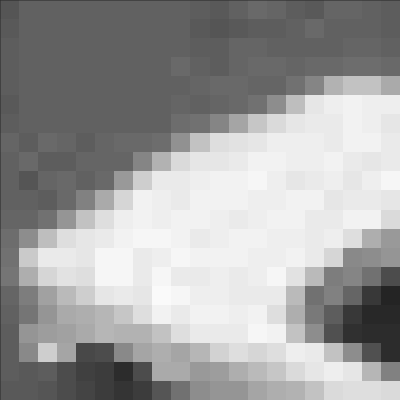}}\\
		
	\subfloat{\includegraphics[width=.18\linewidth]{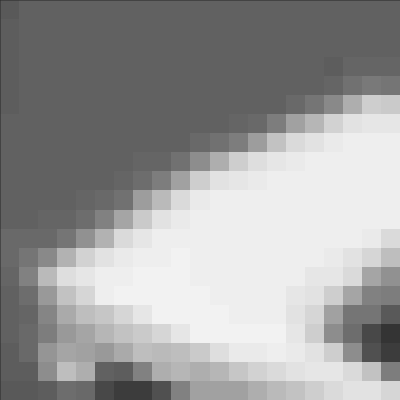}}\enskip
	\subfloat{\includegraphics[width=.18\linewidth]{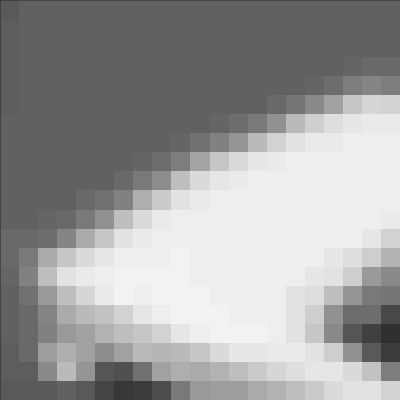}}\enskip
	\subfloat{\includegraphics[width=.18\linewidth]{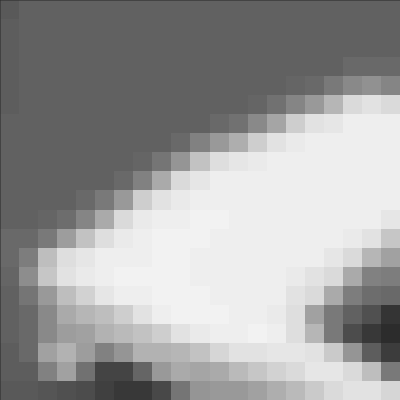}}\enskip
	\subfloat{\includegraphics[width=.18\linewidth]{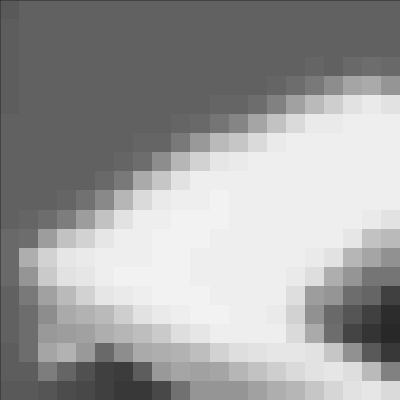}}\enskip
	\subfloat{\includegraphics[width=.18\linewidth]{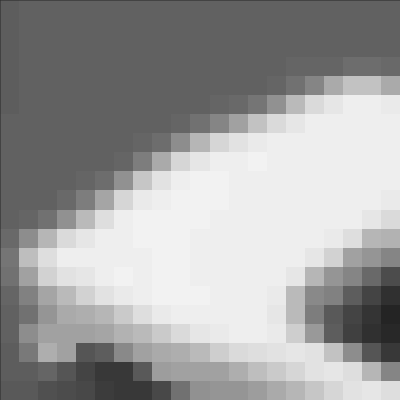}}\\
	
	\subfloat{\includegraphics[width=.18\linewidth]{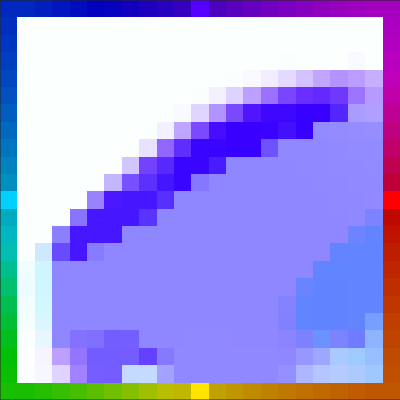}}\enskip
	\subfloat{\includegraphics[width=.18\linewidth]{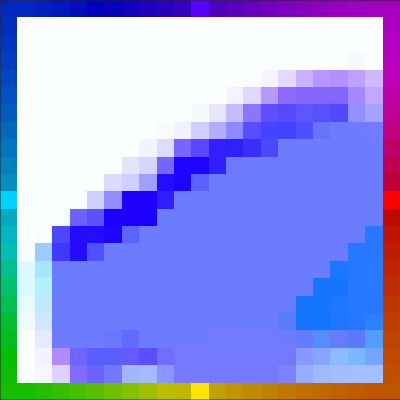}}\enskip
	\subfloat{\includegraphics[width=.18\linewidth]{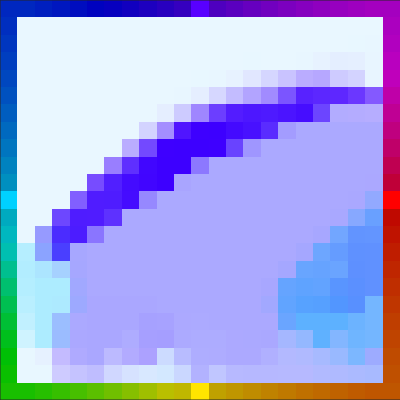}}\enskip
	\subfloat{\includegraphics[width=.18\linewidth]{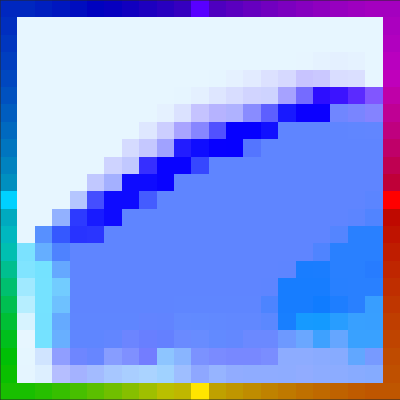}}\enskip
	\subfloat{\includegraphics[width=.18\linewidth]{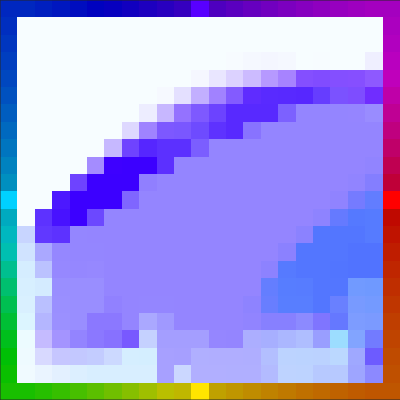}}
	
	\caption[]{Zoom into the Hamburg Taxi sequence (from H.-H. Nagel), complete images can be found in Appendix \ref{section:appendixResults}. Top: Input images, black ones are unknown. Middle: Resulting sequence including time-interpolants. Bottom: Estimated flow fields}
	\label{fig:temporalInpainting}
\end{figure}

\section{Conclusion}
In this paper we propose a joint model for motion estimation and image reconstruction. The model takes a sequence of noisy images as input and simultaneously removes noise while estimating the optical flow between consecutive frames. For the proposed model the existence of a minimizer is proven and we introduce a numerical scheme aiming to solve the variational energy by alternatingly applying a primal-dual method to the problem for the image sequence $u$ and the flow sequence $\boldsymbol{v}$. In the results part we show the benefits of our method in contrast to classical methods for separate image denoising or motion estimation. The presented numerical results included image denoising only but note that the well posedness analysed in this paper holds for general linear operator $K$ and in particular can include image deblurring, image inpainting, sampled 
\section*{Acknowledgments}
This work is based on research that has been done while HD was at the University of Cambridge supported by a David Crighton Fellowship. Moreover, the work has been supported by ERC via Grant EU FP 7 - ERC Consolidator Grant 615216 LifeInverse. MB acknowledges further support by the German Science Foundation DFG via EXC 1003 Cells in Motion Cluster of Excellence, M\"unster, Germany. CBS acknowledges support from Leverhulme Trust project on Breaking the non-convexity barrier, EPSRC grant Nr. EP/M00483X/1 and the EPSRC Centre Nr. EP/N014588/1.

\newpage
\appendix 
\section{Notations}
For this work, the gradient operator $\nabla$ (and the associated divergence operator $\nabla\cdot$) only refers to the spatial dimensions, while time-derivatives are explicitly denoted with subindex $t$.\\
The total variation of $u$ is a semi-norm in the space of functions with bounded variation BV($\Omega$).
\begin{align}
BV(\Omega) := \left\lbrace u \in L^1(\Omega): \left|u\right|_{BV} = \sup_{\phi\in C_0^\infty(\Omega;\R{N}),\left\|\phi\right\|_\infty\leq 1} \int_\Omega u\nabla\cdot\phi dx < \infty \right\rbrace 
\label{def:BV}
\end{align}
For a functional $J:\Omega\rightarrow\mathbb{R}$, the sublevel set is the set of all $u$ for which the functional value lies below $\alpha$,
\begin{align}
\label{def:sublevelset}
	\mathcal{S}_{\alpha} := \left\lbrace u\in\Omega: J(u)< \alpha\right\rbrace.
\end{align}
Due to the fact that the image domain consists of time and space, suitable spaces including space and time are required for the analysis in this paper. The Bochner space
\begin{align*}
L^p(0,T;\CX) := \left\lbrace u : u(\cdot,t)\in\CX \enskip\forall t \in \left[0,T\right], \int_0^T \left\|u(\cdot,t)\right\|_\CX^p dt< \infty\right\rbrace
\end{align*}
is a Banach space with norm
\begin{align*}
\left\|u\right\|_{L^p(0,T;\CX)} = \left(\int_0^T \left\|u(\cdot,t)\right\|_\CX^p dt\right)^\frac{1}{p}.
\end{align*}
A very useful result that will be used in our analysis is the Aubin-Lions Lemma.
\begin{lemma}{Aubin-–Lions}\\
	\label{AubinLionLemma}
	Let $\CX,\CY,\CZ$ be Banach spaces with a compact embedding $\CX\subset\subset\CY$ and a continuous embedding $\CY\hookrightarrow\CZ$. Let $u^n$ be a sequence of bounded functions in $L^p(0,T;\CX)$ and $\partial_t u^n$ be bounded in $L^q(0,T;\CZ)$ (for $q=1$ and $1\leq p<\infty$ or $q>1$ and $1\leq p\leq\infty$). \\
	Then $u^n$ is relatively compact in $L^p(0,T;\CY)$.
\end{lemma}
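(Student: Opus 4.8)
The plan is to follow the classical route, reducing the statement to a vector-valued Fr\'echet--Kolmogorov (Riesz) compactness criterion in the Bochner space $L^p(0,T;\CY)$, supplemented by an Ehrling-type interpolation inequality that converts the compact embedding $\CX\subset\subset\CY$ into quantitative control. Concretely, I would establish relative compactness of $\{u^n\}$ in $L^p(0,T;\CY)$ by verifying two properties of the family: first, that for every $0\leq t_1<t_2\leq T$ the time-integrals $\int_{t_1}^{t_2} u^n(t)\,\dif t$ lie in a relatively compact subset of $\CY$; and second, that the time-translates obey $\|u^n(\cdot+h)-u^n\|_{L^p(0,T-h;\CY)}\to 0$ as $h\to 0$, uniformly in $n$.

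The first building block is the interpolation inequality: using only that $\CX\subset\subset\CY$ compactly and $\CY\hookrightarrow\CZ$ continuously, for every $\eta>0$ there is $C_\eta$ such that
\[
\|w\|_{\CY}\leq \eta\,\|w\|_{\CX}+C_\eta\,\|w\|_{\CZ}\qquad\text{for all }w\in\CX .
\]
I would prove this by contradiction: a failing sequence, normalized in $\CY$ and bounded in $\CX$ yet vanishing in $\CZ$, has by compactness a nonzero $\CY$-limit, which the continuous embedding into $\CZ$ forces to be zero. For the first compactness property, the $L^p(0,T;\CX)$-bound $C_1$ and H\"older's inequality give $\|\int_{t_1}^{t_2} u^n\,\dif t\|_{\CX}\leq (t_2-t_1)^{1/p^*}\,C_1$, so these integrals are bounded in $\CX$ and hence precompact in $\CY$.

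For the second property I would write $u^n(t+h)-u^n(t)=\int_t^{t+h}\partial_t u^n(s)\,\dif s$ and set $g^n(s):=\|\partial_t u^n(s)\|_{\CZ}$, which is bounded in $L^q(0,T)$ by hypothesis. When $q>1$ this yields the uniform pointwise estimate $\|u^n(t+h)-u^n(t)\|_{\CZ}\leq h^{1-1/q}\,C_2$; when $q=1$ a Fubini argument bounds the $L^1$-norm of $t\mapsto\int_t^{t+h} g^n$ by $hC_2$ while its $L^\infty$-norm stays below $C_2$, so $L^\infty$--$L^1$ interpolation bounds the $L^p$-norm of the $\CZ$-translate by $C_2\,h^{1/p}$ for every $1\leq p<\infty$. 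In either case the $\CZ$-translates vanish uniformly as $h\to 0$. Feeding $w=u^n(t+h)-u^n(t)$ into the interpolation inequality and integrating in $t$ gives
\[
\|u^n(\cdot+h)-u^n\|_{L^p(0,T-h;\CY)}\leq \eta\,(2C_1)+C_\eta\,\omega(h),
\]
with $\omega(h)\to 0$ uniformly; choosing $\eta$ small and then $h$ small makes the right-hand side arbitrarily small, uniformly in $n$.

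With both properties verified, the vector-valued Fr\'echet--Kolmogorov criterion yields relative compactness of $\{u^n\}$ in $L^p(0,T;\CY)$, completing the proof. The step I expect to be the main obstacle is the uniform modulus of continuity of the time-translates, where the interplay of the exponents $p$ and $q$ is delicate: the argument above explains precisely why the endpoint $p=\infty$ must be excluded when $q=1$ (there the $L^\infty$-norm of the translate need not tend to zero without equi-integrability of $g^n$), whereas $q>1$ furnishes a genuine pointwise-in-$t$ modulus and hence also covers $p=\infty$ through an Arzel\`a--Ascoli variant of the compactness criterion. A secondary technical point is the precise statement of the Bochner-space Fr\'echet--Kolmogorov theorem itself, which I would either cite or sketch by partitioning $[0,T]$ into small intervals and combining the two verified conditions.
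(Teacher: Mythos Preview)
Your proposal is correct and in fact goes well beyond what the paper does: the paper does not prove this lemma at all but simply refers the reader to the original literature (Aubin, Lions, Simon). What you have sketched is essentially Simon's proof, combining Ehrling's interpolation inequality with the Bochner-space Fr\'echet--Kolmogorov/Simon compactness criterion (precompactness of time-averages in $\CY$ plus uniform vanishing of time-translates in $L^p(0,T;\CY)$). Your handling of the two regimes $q>1$ and $q=1$ is accurate, including the correct identification of why $p=\infty$ fails when $q=1$. The only point worth tightening is that for $p=\infty$ with $q>1$ one usually obtains compactness in $C([0,T];\CY)$ via Arzel\`a--Ascoli (as you indicate), which is slightly stronger than the $L^\infty$ statement written here; this is standard and does not affect the argument.
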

\begin{proof}
	See $\cite{AubinLion1,AubinLion2,AubinLion3}$.
\end{proof}
In other words: If there exists a compact embedding  from one space into another, the embedding compact carries over to the induced Bochner space if enough time-regularity can be shown.

	\section{Error measures}
	To evaluate the performance of the overall model, quality measures for the reconstructed image sequence and the velocity field are needed.\\
	To measure the quality of the reconstructed image sequence we consider the structural similarity index \textbf{SSIM} \cite{wang2004image}, which measures the difference in luminance, contrast and structure of the ground truth image $u$ and the reconstruction $u_{rec}$ as follows:
	\begin{align*}
	SSIM := \frac{(2\mu_{u}\mu_{u_{rec}} + C_1)(2\sigma_{u,u_{rec}}+C_2)}{(\mu_u^2+\mu_{u_{rec}}^2+C_1)(\sigma_u^2+\sigma_{u_{rec}}^2+C_2)},
	\end{align*}
	where $\mu_u,\mu_{u_{rec}},\sigma_u,\sigma_{u_{rec}}$ and $\sigma_{u,u_{rec}}$ are local means, standard deviations and cross-covariances for ground truth image $u$ and reconstruction $u_{rec}$ respectively. The constants are fixed to $C_1=0.01^2$ and $C_2=0.03^2$. The SSIM takes values between $-1$ and $1$, where $1$ stands for perfect similarity. Moreover, we calculate the signal-to-noise ratio \textbf{SNR} and peak signal-to-noise ratio \textbf{PSNR} between ground-truth $u$ and reconstruction $u_{rec}$. 
	\begin{align*}
	SNR := 10\log_{10}(\frac{\text{mean}(u^2)}{\text{mean}((u-u_{rec})^2)} ),\\
	PSNR := 10\log_{10}(\frac{\text{max}(u^2)}{\text{mean}((u-u_{rec})^2)} ).
	\end{align*}
	For the evaluation of the motion field we refer to the work of Baker et. al. \cite{baker2011database}. The most intuitive measure presented there is the average endpoint error \textbf{AEE} proposed in \cite{otte1994optical}, which is the vector-wise Euclidean norm of the difference vector $\boldsymbol{v}-\boldsymbol{v}_{GT}$, where $\boldsymbol{v}$ is the reconstructed velocity field and $\boldsymbol{v}_{GT}$ is the true velocity field. For normalization, the difference is divided by $\left|\Omega\right|$ and we have
	\begin{align}
	\label{definition:aee}
	AEE := \frac{1}{\left|\Omega\right|}\int_{\Omega}\sqrt{(v^1(x)-v^1_{GT}(x))^2 + (v^2(x)-v^2_{GT}(x))^2} \dif x.
	\end{align}
	Another measure we use is the angular error \textbf{AE}, which goes back to the work of Fleet and Jepson \cite{fleet1990computation} and a survey of Barron \textit{et al.} \cite{barron1994performance}. Here $\boldsymbol{v}$ and $\boldsymbol{v}_{GT}$ are projected into the 3-D space (to avoid division by zero) and normalized as follows
	\begin{align*}
	\hat{\boldsymbol{v}} := \frac{(v^1,v^2,1)}{\sqrt{\left\|\boldsymbol{v}\right\|^2 + 1}}, \quad \quad \hat{\boldsymbol{v}}_{GT} := \frac{(v_{GT}^1,v^2_{GT},1)}{\sqrt{\left\|\boldsymbol{v}_{GT}\right\|^2 + 1}}.
	\end{align*}
	The error is then calculated measuring the angle between $\hat{\boldsymbol{v}}$ and $\hat{\boldsymbol{v}}_{GT}$ in the continuous setting as
	\begin{align*}
	AE := \frac{1}{\left|\Omega\right|}\int_{\Omega} \arccos(\hat{\boldsymbol{v}}(x)\cdot\hat{\boldsymbol{v}}_{GT}(x))\dif x,
	\end{align*}

	\section{Discretization}
	\label{section:appendixDiscretization}
	First, we assume the underlying space-time grid to consist of the following set of discrete points:
	\begin{align*} 
	\left\lbrace (i,j,t) : i=0,\ldots,n_x,j=0,\ldots,n_y,t=0,\ldots,n_t \right\rbrace
	\end{align*}
	The resulting discrete derivatives for $v^i$ are calculated using forward differences and Neumann boundary conditions. The corresponding adjoint operator consists of backward differences with Dirichlet boundary conditions and is applied to the dual variables $\boldsymbol{y}$. The resulting scheme reads:
	\begin{align*}
	v^i_x(i,j) &= \begin{cases} v(i+1,j)-v(i,j) &\mbox{if } i<n_x \\ 
	0 & \mbox{if } i=n_x \end{cases}\\
	v^i_y(i,j) &= \begin{cases} v(i,j+1)-v(i,j) &\mbox{if } j<n_y \\ 
	0 & \mbox{if } j=n_y \end{cases}\\
	\nabla\cdot \boldsymbol{y}(i,j) &= 
	\begin{cases} 
	y_1(i,j)-y_1(i-1,j) &\mbox{if } i>0 \\ 
	y_1(i,j) & \mbox{if } i=0\\
	-y_1(i-1,j) & \mbox{if } i=n_x 
	\end{cases}\\
	&+ 
	\begin{cases} 
	y_2(i,j)-y_2(i,j-1) &\mbox{if } j>0 \\ 
	y_2(i,j) & \mbox{if } j=0 \\
	-y_2(i,j-1) & \mbox{if } j=n_y .
	\end{cases}
	\end{align*} 
	The discrete derivatives for the regularizer of $u$ have the same structure. For the operator in the optical flow part we use a forward discretization for the temporal derivative and a central discretization for the spatial derivative. Again, Neumann boundary conditions are applied:
	\begin{align*}
	u_t(i,j,t) &= \begin{cases} u(i,j,t+1)-u(i,j,t) &\mbox{if } t<n_t \\ 
	0 & \mbox{else } \end{cases},\\
	u_x(i,j,t) &= \begin{cases} \frac{u(i+1,j,t)-u(i-1,j,t)}{2} &\mbox{if } i>0 \mbox{ and } i<n_x \mbox{ and } t<n_t \\ 
	0 & \mbox{else } \end{cases},\\
	u_y(i,j,t) &= \begin{cases} \frac{u(i,j+1,t)-u(i,j-1,t)}{2} &\mbox{if } j>0 \mbox{ and } j<n_y \mbox{ and } t<n_t \\ 
	0 & \mbox{else } \end{cases}.
	\end{align*}
	The adjoint operator then yields:
	\begin{align*}
	y_t(i,j,t) &= \begin{cases} y(i,j,t)-y(i,j,t-1) &\mbox{if } t>0 \mbox{ and } t<n_t\\ 
	y(i,j,t) & \mbox{if } t=0\\
	-y(i,j,t-1) & \mbox{if } t=n_t \end{cases},\\
	y_x(i,j,t) &= \begin{cases} \frac{y(i+1,j,t)-y(i-1,j,t)}{2} &\mbox{if } i>1 \mbox{ and } i<n_x-1 \mbox{ and } t<n_t \\ 
	\frac{y(i+1,j,t)}{2} &\mbox{if } i\leq 1 \mbox{ and } t<n_t\\
	\frac{y(i-1,j,t)}{2} &\mbox{if } i\geq n_x-1 \mbox{ and } t<n_t\\
	0&\mbox{else}\end{cases},\\
	y_y(i,j,t) &= \begin{cases} \frac{y(i,j+1,t)-y(i,j-1,t)}{2} &\mbox{if } j>1 \mbox{ and } j<n_y-1 \mbox{ and } t<n_t \\ 
	\frac{y(i,j+1,t)}{2} &\mbox{if } j\leq 1 \mbox{ and } t<n_t\\
	\frac{y(i,j-1,t)}{2} &\mbox{if } j\geq n_y-1 \mbox{ and } t<n_t\\
	0&\mbox{else}\end{cases}.\\
	\end{align*}
	
	\newpage
	\section{Algorithm}
	\label{appendixAlgorithm}
	\begin{algorithm}[H]
		\caption{Joint $TV-TV$ Optical Flow Motion Estimation and Image Reconstruction}
		\label{algorithmTVTV}
		\begin{algorithmic}
			\Statex
			\Function{JointTVTVOpticalFlow}{$f,\alpha,\beta,\gamma,K$}
			\Let{$v,u$}{$0$}
			\While{$\epsilon<threshold$}
			\Let{$u_{OldM}$}{$u$}
			\Let{$\boldsymbol{v}_{OldM}$}{$\boldsymbol{v}$}
			\Let{$y,\bar{u}$}{$0$}
			\While{$\epsilon_u<threshold$}
			\Let{$u_{Old}$}{$u$}
			\Let{$\tilde{\boldsymbol{y}}$}{$\boldsymbol{y}+\sigma C_u\bar{u}$}
			\Let{$y_1$}{$\frac{\tilde{y_1}-\sigma f}{\sigma+1}$}
			\Let{$y_2$}{$\pi_\alpha(\tilde{y_2})$}
			\Let{$y_3$}{$\pi_\gamma(\tilde{y_3})$}
			\Let{$u$}{$u-\tau C^T_u\boldsymbol{y}$}
			\Let{$\bar{u}$}{$2u-u_{Old}$}
			\EndWhile
			\Let{$y,\bar{\boldsymbol{v}}$}{$0$}
			\While{$\epsilon_v<threshold$}
			\Let{$\boldsymbol{v}_{Old}$}{$\boldsymbol{v}$}
			\Let{$\tilde{\boldsymbol{y}}$}{$\boldsymbol{y}+\sigma C_v\bar{\boldsymbol{v}}$}
			\Let{$\boldsymbol{y}$}{$\pi_\lambda(\tilde{\boldsymbol{y}})$}
			\Let{$\tilde{\boldsymbol{v}}$}{$\boldsymbol{v}-\tau C^T_v\boldsymbol{y}$}
			\Let{$\boldsymbol{v}$}{$solveAffine(\boldsymbol{\tilde{v}})$}
			\Let{$\bar{\boldsymbol{v}}$}{$2\boldsymbol{v}-\boldsymbol{v}_{Old}$}
			\EndWhile
			\Let{$\epsilon$}{$\frac{\left| u-u_{OldM} \right| + \left| \boldsymbol{v}-\boldsymbol{v}_{OldM} \right|}{2\left|\Omega\right|}$}
			\EndWhile
			\State \Return{$v$}
			\EndFunction
		\end{algorithmic}
	\end{algorithm}
	
	\newpage
	\section{Results}
	\label{section:appendixResults}
	\begin{figure}[H]
		\centering
		\subfloat{\includegraphics[width=.32\linewidth]{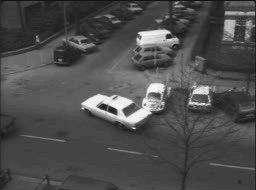}}\enskip
		\subfloat{\includegraphics[width=.32\linewidth]{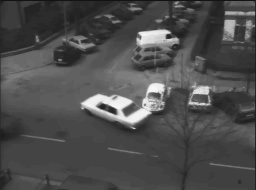}}\enskip
		\subfloat{\includegraphics[width=.32\linewidth]{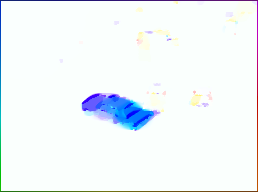}}\\
		\subfloat{\includegraphics[width=.32\linewidth]{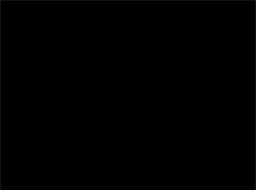}}\enskip
		\subfloat{\includegraphics[width=.32\linewidth]{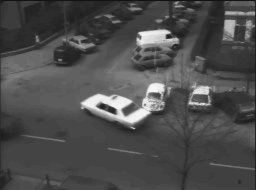}}\enskip
		\subfloat{\includegraphics[width=.32\linewidth]{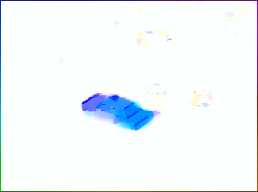}}\\
		\subfloat{\includegraphics[width=.32\linewidth]{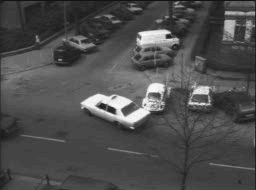}}\enskip
		\subfloat{\includegraphics[width=.32\linewidth]{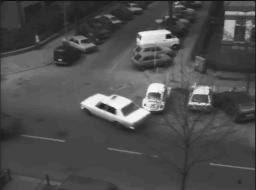}}\enskip
		\subfloat{\includegraphics[width=.32\linewidth]{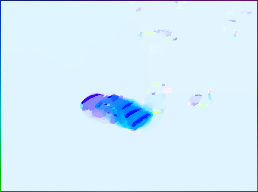}}\\
		\subfloat{\includegraphics[width=.32\linewidth]{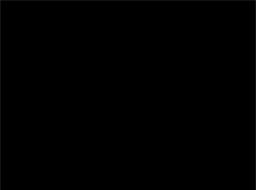}}\enskip
		\subfloat{\includegraphics[width=.32\linewidth]{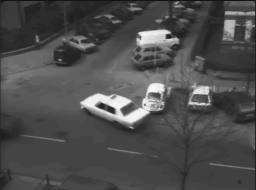}}\enskip
		\subfloat{\includegraphics[width=.32\linewidth]{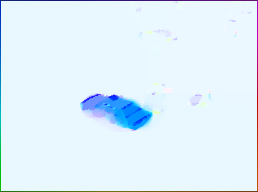}}
		\caption[]{Zoom into the Hamburg Taxi sequence (from H.-H. Nagel). Left: Input images, black are unknown. Middle: Resulting sequence including time-interpolants. Right: Estimated flow fields}
		\label{fig:temporalInpaintingComplete}
	\end{figure}
	
	\newpage
	
\bibliographystyle{plain}
\bibliography{citations}
\end{document}